\documentclass[11pt]{article}
\usepackage{amsmath,amsthm,amssymb}
\usepackage[dvips]{graphicx}

\setlength{\topmargin}{-1cm}
\setlength{\headsep}{1cm}
\setlength{\textwidth}{15cm}
\setlength{\textheight}{23cm}
\setlength{\oddsidemargin}{1cm}
\setlength{\evensidemargin}{0cm}

\pagestyle{myheadings}\markright{Geometric properties of inverse polynomial images}

\newcommand{\R}{\mathbb R}
\newcommand{\C}{\mathbb C}
\newcommand{\HH}{{\cal H}}
\newcommand{\T}{{\cal T}}
\newcommand{\U}{{\cal U}}
\newcommand{\PP}{{\mathbb P}}
\newcommand{\re}{{\operatorname{Re}}}
\newcommand{\im}{{\operatorname{Im}}}
\newcommand{\ii}{{\operatorname{i}}}

\begin{document}


\title{Geometric Properties of Inverse Polynomial Images\footnote{published in: Proceedings Approximation Theory XIII: San Antonio 2010, {\it Springer Proceedings in Mathematics} {\bf 13} (2012), 277--287.}}
\author{Klaus Schiefermayr\footnote{University of Applied Sciences Upper Austria, School of Engineering and Environmental Sciences, Stelzhamerstrasse\,23, 4600 Wels, Austria, \textsc{klaus.schiefermayr@fh-wels.at}}}
\date{}
\maketitle

\theoremstyle{plain}
\newtheorem{theorem}{Theorem}
\newtheorem{corollary}{Corollary}
\newtheorem{lemma}{Lemma}
\newtheorem{definition}{Definition}
\theoremstyle{definition}
\newtheorem{remark}{Remark}
\newtheorem{example}{Example}

\begin{abstract}
Given a polynomial $\T_n$ of degree $n$, consider the inverse image of $\R$ and $[-1,1]$, denoted by $\T_n^{-1}(\R)$ and $\T_n^{-1}([-1,1])$, respectively. It is well known that $\T_n^{-1}(\R)$ consists of $n$ analytic Jordan arcs moving from $\infty$ to $\infty$. In this paper, we give a necessary and sufficient condition such that (1)~$\T_n^{-1}([-1,1])$ consists of $\nu$ analytic Jordan arcs and (2)~$\T_n^{-1}([-1,1])$ is connected, respectively.
\end{abstract}

\noindent\emph{Mathematics Subject Classification (2000):} 30C10, 30E10

\noindent\emph{Keywords:} Analytic Jordan arc, Inverse polynomial image

\section{Introduction}


Let $\PP_n$ be the set of all polynomials of degree $n$ with complex coefficients. For a polynomial $\T_n\in\PP_n$, consider the inverse images $\T_n^{-1}(\R)$ and $\T_n^{-1}([-1,1])$, defined by
\begin{equation}
\T_n^{-1}(\R):=\bigl\{z\in\C:\T_n(z)\in\R\bigr\}
\end{equation}
and
\begin{equation}
\T_n^{-1}([-1,1]):=\bigl\{z\in\C:\T_n(z)\in[-1,1]\bigr\},
\end{equation}
respectively. It is well known that $\T_n^{-1}(\R)$ consists of $n$ analytic Jordan arcs moving from $\infty$ to $\infty$ which cross each other at points which are zeros of the derivative $\T_n'$. In \cite{Peh-2003}, Peherstorfer proved that $\T_n^{-1}(\R)$ may be split up into $n$ Jordan arcs (not necessarily analytic) moving from $\infty$ to $\infty$ with the additional property that $\T_n$ is strictly monotone decreasing from $+\infty$ to $-\infty$ on each of the $n$ Jordan arcs. Thus, $\T_n^{-1}([-1,1])$ is the union of $n$ (analytic) Jordan arcs and is obtained from $\T_n^{-1}(\R)$ by cutting off the $n$ arcs of $\T_n^{-1}(\R)$. In \cite[Thm.\,3]{PehSch-2004}, we gave a necessary and sufficient condition such that $\T_n^{-1}([-1,1])$ consists of $2$ Jordan arcs, compare also \cite{Pakovich-1995}, where the proof can easily be extended to the case of $\ell$ arcs, see also \cite[Remark after Corollary\,2.2]{Peh-2003}. In the present paper, we will give a necessary and sufficient condition such that (1)~$\T_n^{-1}([-1,1])$ consists of $\nu$ (but not less than $\nu$) \emph{analytic} Jordan arcs (in Section\,2) and (2)~$\T_n^{-1}([-1,1])$ is connected (in Section\,3), respectively. From a different point of view as in this paper, inverse polynomial images are considered, e.g., in \cite{PehSt}, \cite{Pakovich-1998}, \cite{Pakovich-2007}, and \cite{Pakovich-2008}.

Inverse polynomial images are interesting for instance in approximation theory, since each polynomial (suitable normed) of degree $n$ is the minimal polynomial with respect to the maximum norm on its inverse image, see \cite{KamoBorodin}, \cite{Peh-1996}, \cite{FischerPeherstorfer}, and \cite{Fischer-1992}.

\section{The Number of (Analytic) Jordan Arcs of an Inverse Polynomial Image}


Let us start with a collection of important properties of the inverse images $\T_n^{-1}(\R)$ and $\T_n^{-1}([-1,1])$. Most of them are due to Peherstorfer\,\cite{Peh-2003} or classical well known results. Let us point out that $\T_n^{-1}(\R)$ (and also $\T_n^{-1}([-1,1])$), on the one hand side, may be characterized by $n$ analytic Jordan arcs and, on the other side, by $n$ (not necessarily analytic) Jordan arcs, on which $\T_n$ is strictly monotone.

Let $C:=\{\gamma(t):t\in[0,1]\}$ be an analytic Jordan arc in $\C$ and let $\T_n\in\PP_n$ be a polynomial such that $\T_n(\gamma(t))\in\R$ for all $t\in[0,1]$. We call a point $z_0=\gamma(t_0)$ a \emph{saddle point} of $\T_n$ on $C$ if $\T_n'(z_0)=0$ and $z_0$ is no extremum of $\T_n$ on $C$.


\begin{lemma}\label{Lem-1}
Let $\T_n\in\PP_n$ be a polynomial of degree $n$.
\begin{enumerate}
\item $\T_n^{-1}(\R)$ consists of $n$ analytic Jordan arcs, denoted by $\tilde{C}_1,\tilde{C}_2,\dots,\tilde{C}_n$, in the complex plane running from $\infty$ to $\infty$.
\item $\T_n^{-1}(\R)$ consists of $n$ Jordan arcs, denoted by $\tilde{\Gamma}_1,\tilde{\Gamma}_2,\dots,\tilde{\Gamma}_n$, in the complex plane running from $\infty$ to $\infty$, where on each $\tilde{\Gamma}_j$, $j=1,2,\ldots,n$, $\T_n(z)$ is strictly monotone decreasing from $+\infty$ to $-\infty$.
\item A point $z_0\in\T_n^{-1}(\R)$ is a crossing point of exactly $m$, $m\geq2$, analytic Jordan arcs $\tilde{C}_{i_1},\tilde{C}_{i_2},\ldots,\tilde{C}_{i_m}$, $1\leq{i}_1<i_2<\ldots<i_m\leq{n}$, if and only if $z_0$ is a zero of $\T'_n$ with multiplicity $m-1$. In this case, the $m$ arcs are cutting each other at $z_0$ in successive angles of $\pi/m$. If $m$ is odd then $z_0$ is a saddle point of $\re\{\T_n(z)\}$ on each of the $m$ arcs. If $m$ is even then, on $m/2$ arcs, $z_0$ is a minimum of $\re\{\T_n(z)\}$ and on the other $m/2$ arcs, $z_0$ is a maximum of $\re\{\T_n(z)\}$.
\item A point $z_0\in\T_n^{-1}(\R)$ is a crossing point of exactly $m$, $m\geq2$, Jordan arcs\\ $\tilde{\Gamma}_{i_1},\tilde{\Gamma}_{i_2},\ldots,\tilde{\Gamma}_{i_m}$, $1\leq{i}_1<i_2<\ldots<i_m\leq{n}$, if and only if $z_0$ is a zero of $\T'_n$ with multiplicity $m-1$.
\item $\T_n^{-1}([-1,1])$ consists of $n$ analytic Jordan arcs, denoted by $C_1,C_2,\dots,C_n$, where the $2n$ zeros of $\T_n^2-1$ are the endpoints of the $n$ arcs. If $z_0\in\C$ is a zero of $\T_n^2-1$ of multiplicity $m$ then exactly $m$ analytic Jordan arcs $C_{i_1},C_{i_2},\ldots,C_{i_m}$ of $\T_n^{-1}([-1,1])$, $1\leq{i}_1<i_2<\ldots<i_m\leq{n}$, have $z_0$ as common endpoint.
\item $\T_n^{-1}([-1,1])$ consists of $n$ Jordan arcs, denoted by $\Gamma_1,\Gamma_2,\dots,\Gamma_n$, with $\Gamma_j\subset\tilde{\Gamma}_j$, $j=1,2,\dots,n$, where on each $\Gamma_j$, $\T_n(z)$ is strictly monotone decreasing from $+1$ to $-1$. If $z_0\in\C$ is a zero of $\T_n^2-1$ of multiplicity $m$, then exactly $m$ Jordan arcs $\Gamma_{i_1},\ldots,\Gamma_{i_m}$ of $\T_n^{-1}([-1,1])$, $1\leq{i}_1<i_2<\ldots<i_m\leq{n}$, have $z_0$ as common endpoint.
\item Two arcs $C_j,C_k$, $j\neq k$, cross each other at most once (the same holds for $\Gamma_j,\Gamma_k$).
\item Let $S:=\T_n^{-1}([-1,1])$, then the complement $\C\setminus{S}$ is connected.
\item Let $S:=\T_n^{-1}([-1,1])$, then, for $P_n(z):=\T_n((z-b)/a)$, $a,b\in\C$, $a\neq0$, the inverse image is $P_n^{-1}([-1,1])=aS+b$.
\item $\T_n^{-1}([-1,1])\subseteq\R$ if and only if the coefficients of $\T_n$ are real, $\T_n$ has $n$ simple real zeros and $\min\bigl\{|\T_n(z)|:\T'_n(z)=0\bigr\}\geq1$.
\item $\T_n^{-1}(\R)$ is symmetric with respect to the real line if and only if $\T_{n}(z)$ or $\ii\T_{n}(z)$ has real coefficients only.
\end{enumerate}
\end{lemma}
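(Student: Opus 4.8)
The plan is to reduce every assertion to two tools: the local normal form of $\T_n$ and the maximum principle for the harmonic function $\im\,\T_n$. Near any point $z_0$ at which $\T_n'$ vanishes to order $m-1$ (so $m=1$ at regular points), a holomorphic change of variable gives $\T_n(z)-\T_n(z_0)=\zeta^m$. If $\T_n(z_0)\in\R$, the preimage of $\R$ is $\{\im\,\zeta^m=0\}$, namely the $2m$ rays $\arg\zeta\in\{k\pi/m\}$, which form $m$ analytic arcs meeting at successive angles $\pi/m$; this is the local content of (i) and the whole of (iii). Since $\re\,\zeta^m=r^m\cos(m\theta)$ equals $(-1)^k r^m$ on the ray $\theta=k\pi/m$, the two rays of a single arc carry opposite signs when $m$ is odd (a saddle of $\re\,\T_n$) and equal signs when $m$ is even (splitting the arcs into $m/2$ minima and $m/2$ maxima), which is the last part of (iii). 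For the global count in (i) I would use $\T_n(z)\sim a_nz^n$, so that $\T_n(z)\in\R$ forces $n\arg z\equiv-\arg a_n\pmod\pi$, i.e.\ exactly $2n$ asymptotic directions and hence $n$ arcs, each running from $\infty$ to $\infty$; the crossings are precisely the finitely many zeros of $\T_n'$, and the reflection principle keeps each arc analytic through them.

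For the monotone descriptions (ii) and (iv) I would invoke Peherstorfer's construction \cite{Peh-2003}: on $\T_n^{-1}(\R)$ the real function $\T_n$ has interior extrema only at crossings, and by the sign computation above exactly $m$ of the $2m$ rays at a crossing carry values above $\T_n(z_0)$ and $m$ below it; pairing each incoming ray with an outgoing one reroutes the half-arcs into $n$ arcs on which $\T_n$ decreases strictly from $+\infty$ to $-\infty$, the number through each crossing being $m$, which is (iv). Parts (v) and (vi) follow by intersecting with $\{\T_n\in[-1,1]\}$: on each monotone arc $\tilde\Gamma_j$ this set is a single sub-arc $\Gamma_j$ from $\T_n=+1$ to $\T_n=-1$, giving $n$ arcs whose endpoints are the $2n$ zeros of $\T_n^2-1$; applying the normal form at a zero of $\T_n\mp1$ of order $m$, where $[-1,1]$ is now approached from one side only, yields $m$ rays and hence the stated $m$-fold incidence.

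The topological statements (vii) and (viii) I would settle together by the maximum principle. If two arcs crossed twice, or if $\C\setminus S$ had a bounded component $\Omega$, then $\partial\Omega$ would lie in $\T_n^{-1}(\R)$, so the harmonic function $\im\,\T_n$ would vanish on $\partial\Omega$ and therefore on all of $\Omega$; but then $\T_n$ would be real on an open set, forcing it constant by the open mapping theorem, a contradiction. Hence no region is enclosed, which gives both that $C_j,C_k$ (and $\Gamma_j,\Gamma_k$) meet at most once and that $\C\setminus S$ is connected. Part (ix) is the immediate chain $P_n(z)\in[-1,1]\iff\T_n((z-b)/a)\in[-1,1]\iff(z-b)/a\in S\iff z\in aS+b$.

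The real substance is in (x) and (xi). For the forward direction of (x) I would first note that $n$ simple real zeros together with real coefficients force, by Rolle's theorem, all $n-1$ zeros of $\T_n'$ to be real; the hypothesis $\min\{|\T_n(z)|:\T_n'(z)=0\}\ge1$ then makes $\{x\in\R:\T_n(x)\in[-1,1]\}$ consist of $n$ closed intervals, each a monotone arc, and since $S$ has only $n$ arcs these exhaust it, so $S\subseteq\R$. Conversely, if $S\subseteq\R$ then $\T_n$ is real on a nondegenerate real interval, so it equals its conjugate-coefficient polynomial and the coefficients are real; every zero lies in $S\subseteq\R$, and a multiple zero would be an interior point of $S$ at which (by (iii)) two arcs cross at an angle below $\pi$, producing a non-real arc in $S$, so the zeros are simple. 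By Rolle all $n-1$ critical points are then real, and a real critical point with $|\T_n|<1$ would likewise be an interior crossing point of $S$ forcing a non-real arc; hence every critical value has modulus at least $1$. For (xi), note that symmetry of $\T_n^{-1}(\R)$ about $\R$ is exactly the identity $\T_n^{-1}(\R)=Q^{-1}(\R)$ for $Q(z):=\overline{\T_n(\bar z)}$, the polynomial with conjugated coefficients. I expect the main obstacle to be the rigidity step: two degree-$n$ polynomials with the same preimage of $\R$ must be related by a real affine map. I would prove this by observing that each complementary face of $\T_n^{-1}(\R)$ is mapped conformally by both $\T_n$ and $Q$ onto a half-plane, so $\phi:=Q\circ\T_n^{-1}$ is half-plane-to-half-plane on each face and continues across the separating arcs by Schwarz reflection to an entire map preserving $\R$, hence $\phi(w)=\alpha w+\beta$ with $\alpha,\beta\in\R$. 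Then $Q=\alpha\T_n+\beta$, and comparison of coefficients forces $\alpha=\pm1$, i.e.\ $a_k\in\R$ for all $k\ge1$ or $a_k\in\ii\R$ for all $k\ge1$, which says precisely that $\T_n$ or $\ii\T_n$ has real coefficients; the converse direction is the one-line verification that $\overline{\T_n(z)}=\T_n(\bar z)$ when the coefficients are real, together with the analogous identity for $\ii\T_n$.
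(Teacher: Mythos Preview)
Your treatment is far more detailed than the paper's: the paper declares (i), (iii), (iv), (xi) ``well known,'' cites \cite{Peh-2003} for (ii) and (x), derives (v)--(vi) from (i)--(iv) via the observation that a zero of $\T_n^2-1$ of multiplicity $m$ is a zero of $\T_n'$ of multiplicity $m-1$, gives the maximum-principle argument for (viii), derives (vii) from (viii), and reads (ix) off the definition. Where the paper does argue (namely (vii), (viii), (ix), and the reduction of (v)--(vi) to the earlier parts), your approach coincides with it.

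There is, however, a genuine gap in your argument for (xi). The claim that each complementary face of $\T_n^{-1}(\R)$ is mapped \emph{conformally} by $\T_n$ onto a half-plane is false whenever $\T_n$ has a critical value off $\R$, and this can happen under the symmetry hypothesis. Take $\T_3(z)=\ii(z^3-3z)$: then $\ii\T_3$ has real coefficients, so $\T_3^{-1}(\R)$ is symmetric about $\R$ (it is the imaginary axis together with the hyperbola $x^2-3y^2=3$), yet the critical points $z=\pm1$ lie strictly inside complementary faces, with critical values $\mp2\ii\notin\R$. On those faces $\T_3$ is a degree-two branched cover of a half-plane, not a conformal map, so $Q\circ\T_n^{-1}$ is not even single-valued there and the Schwarz-reflection step does not go through as written. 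The rigidity statement you are after (two degree-$n$ polynomials with identical preimage of $\R$ differ by a real affine map) is correct, but it needs a different justification---for instance, by first showing that the harmonic polynomials $\im\T_n$ and $\im Q$, having identical zero sets and equal degree, must be real scalar multiples of one another, and then recovering $Q=\alpha\T_n+\beta$ from the Cauchy--Riemann equations. A minor related point: in the case $\alpha=-1$ your comparison of coefficients gives $a_k\in\ii\R$ only for $k\ge1$, while the constant term remains unconstrained (e.g.\ $\T_1(z)=\ii z+1$ has symmetric preimage $\ii\R$); this is really an imprecision in the statement of (xi) itself, since $\T_n^{-1}(\R)$ is unchanged by adding a real constant.
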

\begin{proof}
(i), (iii), (iv), and (xi) are well known.\\
For (ii), see \cite[Thm.\,2.2]{Peh-2003}.\\
Concerning the connection between (iii),(iv) and (v),(vi) note that each zero $z_0$ of $Q_{2n}(z)=\T_n^2(z)-1\in\PP_{2n}$ with multiplicity $m$ is a zero of $Q_{2n}'(z)=2\T_n(z)\,\T_n'(z)$ with multiplicity $m-1$, hence a zero of $\T_n'(z)$ with multiplicity $m-1$. Thus, (v) and (vi) follow immediately from (i)\&(iii) and (ii)\&(iv), respectively.\\
(vii) follows immediately from (viii).\\
Concerning (viii), suppose that there exists a simple connected domain $B$, which is surrounded by a subset of $\T_n^{-1}([-1,1])$. Then the harmonic function $v(x,y):=\im\{\T_n(x+\ii{y})\}$ is zero on $\partial{B}$ thus, by the maximum principle, $v(x,y)$ is zero on $B$, which is a contradiction.\\
(ix) follows from the definition of $\T_n^{-1}([-1,1])$.\\
For (x), see \cite[Cor.\,2.3]{Peh-2003}.
\end{proof}


\begin{example}\label{Ex}
Consider the polynomial $\T_n(z):=1+z^2(z-1)^3(z-2)^4$ of degree $n=9$. Fig.\,\ref{Fig_InverseImageGeneral} shows the inverse images $\T_n^{-1}([-1,1])$ (solid line) and $\T_n^{-1}(\R)$ (dotted and solid line). The zeros of $\T_n+1$ and $\T_n-1$ are marked with a circle and a disk, respectively. One can easily identitfy the $n=9$ analytic Jordan arcs $\tilde{C}_1,\tilde{C}_2,\ldots,\tilde{C}_n$ which $\T_n^{-1}(\R)$ consists of, compare Lemma\,\ref{Lem-1}\,(i), and the $n=9$ analytic Jordan arcs $C_1,C_2,\ldots,C_n$ which $\T_n^{-1}([-1,1])$ consists of, compare Lemma\,\ref{Lem-1}\,(v), where the endpoints of the arcs are exactly the circles and disks, i.e., the zeros of $\T_n^2-1$. Note that $\tilde{C}_1=\R$, $C_1=[-0.215\ldots,0]$ and $C_2=[0,1]$.
\end{example}


\begin{figure}[ht]
\begin{center}
\includegraphics[scale=1.6]{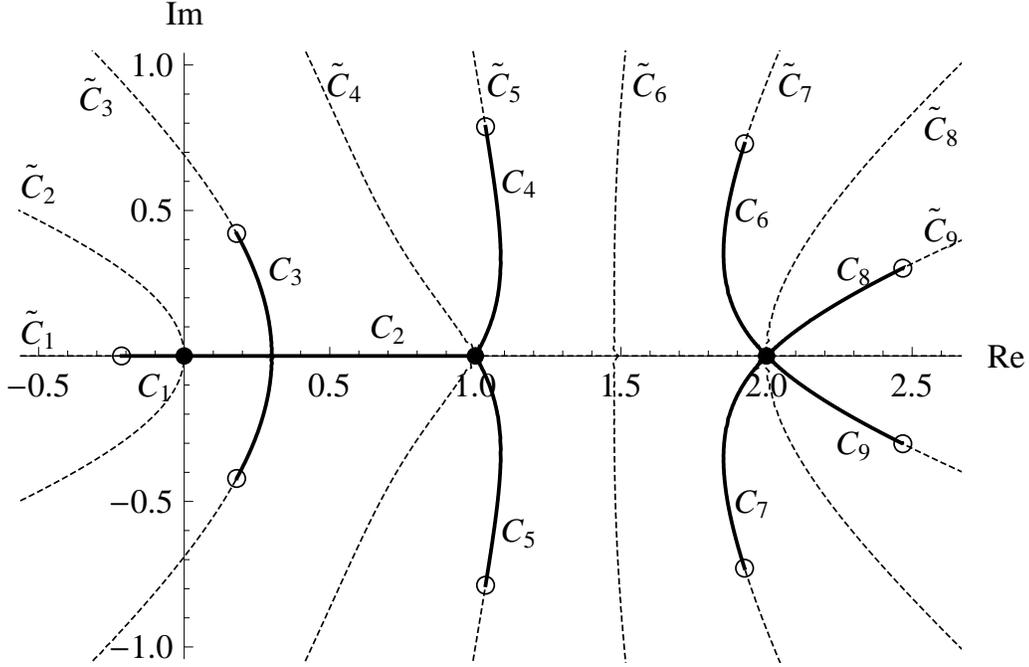}
\caption{\label{Fig_InverseImageGeneral} Inverse images $\T_9^{-1}([-1,1])$ (solid line) and $\T_9^{-1}(\R)$ (dotted and solid line) for the polynomial $\T_9(z):=1+z^2(z-1)^3(z-2)^4$}
\end{center}
\end{figure}


Before we state the result concerning the minimal number of analytic Jordan arcs $\T_n^{-1}([-1,1])$ consists of, let us do some preparations. Let $\T_n\in\PP_n$ and consider the zeros of the polynomial $\T_n^2-1\in\PP_{2n}$. Let $\{a_1,a_2,\ldots,a_{2\ell}\}$ be the set of all zeros of $\T_n^2-1$ with \emph{odd} multiplicity, where $a_1,a_2,\ldots,a_{2\ell}$ are pairwise distinct and each $a_j$ has multiplicity $2\beta_j-1$, $j=1,\ldots,2\ell$. Further, let
\begin{equation}
(b_1,b_2,\ldots,b_{2\nu}):=(\underbrace{a_1,\ldots,a_1}_{(2\beta_1-1)-\text{times}},
\underbrace{a_2,\ldots,a_2}_{(2\beta_2-1)-\text{times}},\ldots,\underbrace{a_{2\ell},\ldots,a_{2\ell}}_{(2\beta_{2\ell}-1)-\text{times}}),
\end{equation}
thus
\begin{equation}
2\nu=\sum_{j=1}^{2\ell}(2\beta_j-1),
\end{equation}
i.e., $b_1,b_2,\ldots,b_{2\nu}$ are the zeros of odd multiplicity \emph{written according to their multiplicity}.


\begin{theorem}\label{Thm-NuArcs}
Let $\T_n\in\PP_n$ be any polynomial of degree $n$. Then, $\T_n^{-1}([-1,1])$ consists of $\nu$ (but not less than $\nu$) analytic Jordan arcs with endpoints $b_1,b_2,\ldots,b_{2\nu}$ if and only if $\T_n^2-1$ has exactly $2\nu$ zeros $b_1,b_2,\ldots,b_{2\nu}$ (written according to their multiplicity) of odd multiplicity.
\end{theorem}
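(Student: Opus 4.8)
The plan is to start from the decomposition in Lemma~\ref{Lem-1}\,(v): $\T_n^{-1}([-1,1])$ is always the union of the $n$ analytic Jordan arcs $C_1,\dots,C_n$, whose $2n$ endpoints (counted with multiplicity) are exactly the zeros of $\T_n^2-1$. The point is that these $n$ ``atomic'' arcs may concatenate into longer analytic arcs, and the combinatorics of how they join at a common endpoint $z_0$ is governed by the multiplicity $m$ of $z_0$ as a zero of $\T_n^2-1$. So the first step is a local analysis at a zero $z_0$ of $\T_n^2-1$ of multiplicity $m$: by Lemma~\ref{Lem-1}\,(v), exactly $m$ of the arcs $C_{i_1},\dots,C_{i_m}$ have $z_0$ as an endpoint, and (using part~(iii), via the observation in the proof of the Lemma that $z_0$ is a zero of $\T_n'$ of multiplicity $m-1$) near $z_0$ the full set $\T_n^{-1}(\R)$ looks like $m$ analytic arcs crossing at successive angles $\pi/m$; the set $\T_n^{-1}([-1,1])$ is obtained by keeping, on each such crossing arc, only the side(s) on which $\re\{\T_n\}=\T_n\in[-1,1]$. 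I would show that when $m$ is even, $z_0$ is an \emph{interior} point of $\T_n^{-1}([-1,1])$ in the sense that the $m$ atomic arcs pair up into $m/2$ analytic arcs passing analytically through $z_0$ (here one uses that $\T_n(z)-\T_n(z_0)=(z-z_0)^m g(z)$ with $g(z_0)\neq0$ and $\T_n(z_0)=\pm1$, so $z_0$ is a local extremum on each crossing arc and the value $\pm1$ is attained on both local branches), whereas when $m$ is odd, $z_0$ is a genuine \emph{endpoint}: exactly one atomic arc terminates there and $z_0$ is a saddle of $\T_n$ in the sense defined before the Lemma.

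Next I would pass from this local picture to the global count. Define an equivalence on the $n$ atomic arcs by: glue $C_j$ and $C_k$ if they share an endpoint $z_0$ that is a zero of $\T_n^2-1$ of even multiplicity, and take the transitive closure. Each equivalence class, glued along such even-multiplicity endpoints, forms a single analytic Jordan arc (analyticity at each gluing point is exactly the even-multiplicity computation above; one must also check no class can close up into a Jordan curve, which follows from Lemma~\ref{Lem-1}\,(viii), that $\C\setminus S$ is connected, so $S$ contains no closed loop). The endpoints of these maximal analytic arcs are precisely the zeros of $\T_n^2-1$ of \emph{odd} multiplicity, i.e.\ the points $b_1,\dots,b_{2\nu}$. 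A counting argument then pins down the number of maximal arcs: each maximal analytic arc has exactly two endpoints drawn from the list $b_1,\dots,b_{2\nu}$, and conversely at each $b_i$ (odd multiplicity) exactly one maximal arc terminates — this is the content of the ``exactly one atomic arc with a saddle'' statement in the odd case. Hence the number of maximal analytic arcs is $2\nu/2=\nu$, and their endpoint multiset is exactly $\{b_1,\dots,b_{2\nu}\}$, giving the formula $2\nu=\sum_{j=1}^{2\ell}(2\beta_j-1)$ from the preamble.

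It remains to argue minimality: $\T_n^{-1}([-1,1])$ cannot be written as a union of fewer than $\nu$ analytic Jordan arcs. Here I would use that an analytic Jordan arc has exactly two ``ends,'' while at a point $b_i$ of odd multiplicity the local structure found above is \emph{not} that of an interior point of an analytic arc — rather, it is an endpoint in the strong sense that only one atomic branch arrives and $\T_n$ has a saddle there, so the set is locally a single arc terminating at $b_i$, not part of a smooth through-passage. Therefore each $b_i$ must be an endpoint of one of the arcs in any decomposition into analytic Jordan arcs; there are $2\nu$ such forced endpoints and each arc absorbs at most two of them, so at least $\nu$ arcs are needed. Combining, exactly $\nu$ analytic Jordan arcs are needed and they have endpoints $b_1,\dots,b_{2\nu}$; the converse direction (if there are $2\nu$ odd-multiplicity zeros then there are $\nu$ arcs) is precisely the construction above, so the equivalence is established. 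The main obstacle I anticipate is the local analysis at high-multiplicity zeros with $m$ odd, $m\geq3$: one has to be careful that, although $m\geq3$ atomic arcs of $\T_n^{-1}(\R)$ cross there, only the part in $\T_n^{-1}([-1,1])$ matters, and one must verify rigorously that this intersection consists of exactly one arc ending at $z_0$ (with the others' relevant pieces attached through nearby even-type gluings or running off elsewhere) — equivalently, that the parity bookkeeping $2\beta_j-1$ really does count atomic-arc endpoints at $a_j$ correctly. The rest is essentially a careful but routine gluing-and-counting argument built on Lemma~\ref{Lem-1}.
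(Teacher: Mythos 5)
Your overall strategy is the same as the paper's: start from the $n$ atomic analytic arcs of Lemma\,\ref{Lem-1}\,(v), glue pairs of them at the even-multiplicity zeros of $\T_n^2-1$ (using Lemma\,\ref{Lem-1}\,(iii) to see that gluing is possible exactly there), and count. The paper simply counts $\alpha_1+\dots+\alpha_k=n-\nu$ gluings and gets $\nu$ remaining arcs, handling necessity by the uniqueness of $\nu$; your version adds a minimality argument via forced endpoints and rules out closed loops via Lemma\,\ref{Lem-1}\,(viii), both of which are fine additions.

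However, there is a genuine error in your local analysis at an odd-multiplicity zero, and it is precisely the point you flagged as your ``main obstacle.'' You assert that when $m$ is odd ``exactly one atomic arc terminates there.'' This contradicts Lemma\,\ref{Lem-1}\,(v), which says that exactly $m$ of the arcs $C_{i_1},\dots,C_{i_m}$ have $z_0$ as a common endpoint (in the paper's Example, $a_1=1$ has multiplicity $3$ and is an endpoint of the three arcs $C_2,C_4,C_5$). If one atomic arc really ended at each odd-multiplicity \emph{point}, your endpoint count would be $2\ell$ rather than $2\nu$ and you would obtain $\ell$ arcs, i.e.\ Theorem\,\ref{Thm-EllArcs} instead of Theorem\,\ref{Thm-NuArcs}. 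The correct local picture for odd $m\geq3$ is: $z_0$ is a zero of $\T_n'$ of (even) multiplicity $m-1$, so by Lemma\,\ref{Lem-1}\,(iii) it is a crossing point of $m$ analytic arcs $\tilde{C}_{i_1},\dots,\tilde{C}_{i_m}$ and a saddle point of $\re\{\T_n\}$ on \emph{each} of them; hence on each $\tilde{C}_{i_j}$ exactly one of the two local branches satisfies $\T_n\in[-1,1]$, giving \emph{one atomic arc per crossing arc $\tilde{C}_{i_j}$}, i.e.\ $m$ atomic-arc ends at $z_0$ in total, no two of which lie on the same $\tilde{C}$ and which therefore cannot be glued analytically through $z_0$. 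With this correction the bookkeeping $2\nu=\sum_j(2\beta_j-1)$ counts arc-ends correctly, your count $2\nu/2=\nu$ goes through, and your minimality argument also needs the same fix (the point $a_j$ forces $2\beta_j-1$ arc-ends, not one). Once this is repaired the proof is correct and essentially the paper's.
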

\begin{proof}
By Lemma\,\ref{Lem-1}\,(v), $\T_n^{-1}([-1,1])$ consists of $n$ analytic Jordan arcs $C_1,C_2,\ldots,C_n$, which can be combined into $\nu$ analytic Jordan arcs in the following way. Clearly, two analytic Jordan arcs $C_{i_1}$ and $C_{i_2}$ can be joined together into one analytic Jordan arc if they have the same endpoint, which is a zero of $\T_n^2-1$, and if they lie on the same analytic Jordan arc $\tilde{C}_{i_3}$ of Lemma\,\ref{Lem-1}\,(i). By Lemma\,\ref{Lem-1}\,(iii) and (v), such combinations are possible only at the zeros of $\T_n^2-1$ of \emph{even} multiplicity. More precisely, let $d_1,d_2,\ldots,d_k$ be the zeros of $\T_n^2-1$ with even multiplicities $2\alpha_1,2\alpha_2,\ldots,2\alpha_k$, where, by assumption,
\[
2\alpha_1+2\alpha_2+\ldots+2\alpha_k=2n-2\nu.
\]
By Lemma\,\ref{Lem-1}\,(iii) and (v), at each point $d_j$, the $2\alpha_j$ analytic Jordan arcs of $\T_n^{-1}([-1,1])$ can be combined into $\alpha_j$ analytic arcs, $j=1,2,\ldots,k$. Altogether, the number of such combinations is $\alpha_1+\alpha_2+\ldots+\alpha_k=n-\nu$, thus the the total number of $n$ analytic Jordan arcs is reduced by $n-\nu$, hence $\nu$ analytic Jordan arcs remain and the sufficiency part is proved. Since, for each polynomial $\T_n\in\PP_n$, there is a unique $\nu\in\{1,2,\ldots,n\}$ such that $\T_n^2-1$ has exactly $2\nu$ zeros of odd multiplicity (counted with multiplicity), the necessity part follows.
\end{proof}


\begin{example}
For a better understanding of the combination of two analytic Jordan arcs into one analytic Jordan arc, as done in the proof of Theorem\,\ref{Thm-NuArcs}, let us again consider the inverse image of the polynomial of Example\,\ref{Ex}.
\begin{itemize}
\item The point $d_1=0$ is a zero of $\T_n-1$ with multiplicity $2\alpha_1=2$, thus $2$ analytic Jordan arcs, here $C_1$ and $C_2$, have $d_1$ as endpoint, compare Lemma\,\ref{Lem-1}\,(v). Along the arc $\tilde{C}_1$, $d_1$ is a maximum, along the arc $\tilde{C}_2$, $d_1$ is a minimum, compare Lemma\,\ref{Lem-1}\,(iii), thus the $2$ analytic Jordan arcs $C_1$ and $C_2$ can be joined together into one analytic Jordan arc $C_1\cup{C}_2$.
\item The point $d_2=2$ is a zero of $\T_n-1$ with multiplicity $2\alpha_2=4$, thus $4$ analytic Jordan arcs, here $C_6$, $C_7$, $C_8$ and $C_9$, have $d_2$ as endpoint. Along the arc $\tilde{C}_7$ or $\tilde{C}_9$, $d_3$ is a maximum, along the arc $\tilde{C}_8$ or $\tilde{C}_1$, $d_3$ is a minimum, compare Lemma\,\ref{Lem-1}\,(iii). Hence, the analytic Jordan arcs $C_6$ and $C_9$ can be combined into one analytic Jordan arc $C_6\cup{C}_9$, analogously $C_7$ and $C_8$ can be combined into $C_7\cup{C}_8$.
\item The point $a_1=1$ is a zero of $\T_n-1$ with multiplicity $3$, thus $3$ analytic Jordan arcs, here $C_2$, $C_4$ and $C_5$, have $a_1$ as endpoint. Since $a_1$ is a saddle point along each of the three analytic Jordan arcs $\tilde{C}_1,\tilde{C}_4,\tilde{C}_5$, compare Lemma\,\ref{Lem-1}\,(iii), no combination of arcs can be done.
\end{itemize}
Altogether, we get $\alpha_1+\alpha_2=3=n-\nu$ combinations and therefore $\T_n^{-1}([-1,1])$ consists of $\nu=6$ analytic Jordan arcs, which are given by $C_1\cup{C}_2$, $C_3$, $C_4$, $C_5$, $C_6\cup{C}_9$ and $C_7\cup{C}_8$.
\end{example}


\begin{lemma}\label{Lemma-PolEq}
For any polynomial $\T_n(z)=c_nz^n+\ldots\in\PP_n$, $c_n\in\C\setminus\{0\}$, there exists a unique $\ell\in\{1,2,\ldots,n\}$, a unique monic polynomial $\HH_{2\ell}(z)=z^{2\ell}+\ldots\in\PP_{2\ell}$ with pairwise distinct zeros $a_1,a_2,\ldots,a_{2\ell}$, i.e.,
\begin{equation}\label{H}
\HH_{2\ell}(z)=\prod_{j=1}^{2\ell}(z-a_j),
\end{equation}
and a unique polynomial $\U_{n-\ell}(z)=c_nz^{n-\ell}+\ldots\in\PP_{n-\ell}$ with the same leading coefficient $c_n$ such that the polynomial equation
\begin{equation}\label{TU}
\T_n^2(z)-1=\HH_{2\ell}(z)\,\U_{n-\ell}^2(z)
\end{equation}
holds. Note that the points $a_1,a_2,\ldots,a_{2\ell}$ are exactly those zeros of $\T_n^2-1$ which have odd multiplicity.
\end{lemma}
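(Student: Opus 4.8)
The plan is to obtain the factorization directly from the prime factorization of the polynomial $\T_n^2-1$ over $\C$. First I would write $\T_n^2-1 = c_n^2\prod_{j}(z-\zeta_j)^{m_j}$, where the $\zeta_j$ are the pairwise distinct zeros of $\T_n^2-1$ with multiplicities $m_j\geq 1$; note the leading coefficient is $c_n^2$ since $\T_n$ has leading coefficient $c_n$ and $\deg(\T_n^2-1)=2n$. Partition the index set according to the parity of $m_j$: let $a_1,\dots,a_{2\ell}$ be those $\zeta_j$ with $m_j$ odd, say $m_j = 2\beta_j-1$, and let $d_1,\dots,d_k$ be those with $m_j$ even, say $m_j = 2\alpha_j$. (The number of odd-multiplicity zeros is even because $\sum_j m_j = 2n$ is even, so that "$2\ell$" is legitimate; this is exactly the same observation used at the end of the proof of Theorem \ref{Thm-NuArcs}.) Then define $\HH_{2\ell}(z):=\prod_{j=1}^{2\ell}(z-a_j)$, which is monic of degree $2\ell$ with pairwise distinct zeros, and observe that
\[
\T_n^2(z)-1 = \HH_{2\ell}(z)\cdot\Biggl(c_n\prod_{j=1}^{2\ell}(z-a_j)^{\beta_j-1}\prod_{i=1}^{k}(z-d_i)^{\alpha_i}\Biggr)^{\!2},
\]
since $m_j = 1 + 2(\beta_j-1)$ for the odd zeros. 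Setting $\U_{n-\ell}(z)$ equal to the quantity in the large parentheses gives \eqref{TU}; it is a polynomial with leading coefficient $c_n$, and a degree count (the exponents sum to $n-\ell$, because $\sum(2\beta_j-1)+\sum 2\alpha_i = 2n$ forces $\sum(\beta_j-1)+\sum\alpha_i = n-\ell$) shows $\U_{n-\ell}\in\PP_{n-\ell}$. Finally $\ell\in\{1,\dots,n\}$: $\ell\geq 1$ because $\T_n^2-1$ is nonconstant and genuinely has at least one zero of odd multiplicity (if all multiplicities were even, $\T_n^2-1$ would be a perfect square $R^2$, forcing $(\T_n-R)(\T_n+R)=1$, hence $\T_n\pm R$ constant, contradicting $\deg\T_n=n\geq 1$ — alternatively one just notes $\ell\le n$ and the parity argument already guarantees nonemptiness), and $\ell\leq n$ since $\deg\HH_{2\ell}=2\ell\leq 2n$.

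For uniqueness, suppose $\T_n^2-1 = \HH_{2\ell}\U_{n-\ell}^2 = \tilde\HH_{2\tilde\ell}\tilde\U_{n-\tilde\ell}^2$ are two such representations. Unique factorization in $\C[z]$ determines the multiplicity of every zero of $\T_n^2-1$; in the first representation the zero $z_0$ has multiplicity $1\cdot[\HH_{2\ell}(z_0)=0] + 2\cdot(\text{mult. of }z_0\text{ in }\U_{n-\ell})$, which is odd precisely when $z_0$ is a zero of $\HH_{2\ell}$. Hence the zero set of $\HH_{2\ell}$ — and likewise of $\tilde\HH_{2\tilde\ell}$ — must coincide with the set of odd-multiplicity zeros of $\T_n^2-1$; since both are monic with simple zeros, $\HH_{2\ell}=\tilde\HH_{2\tilde\ell}$ and $\ell=\tilde\ell$. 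Then $\U_{n-\ell}^2 = \tilde\U_{n-\ell}^2$, so $\U_{n-\ell} = \pm\tilde\U_{n-\ell}$, and the normalization that both have leading coefficient $c_n$ (not $-c_n$) forces the $+$ sign. This also yields the final sentence of the lemma, that $a_1,\dots,a_{2\ell}$ are exactly the odd-multiplicity zeros of $\T_n^2-1$.

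I do not expect a serious obstacle here; the statement is essentially a bookkeeping consequence of unique factorization. The one point that warrants care is pinning down the leading coefficient of $\U_{n-\ell}$ (it is $c_n$, not $c_n^2$, because the monic $\HH_{2\ell}$ already absorbs the discrepancy: $c_n^2 = 1\cdot c_n^2$), and the small argument that $\ell\geq 1$, i.e.\ that $\T_n^2-1$ cannot be a perfect square — both of which I have indicated above.
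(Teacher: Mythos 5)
Your proof is correct, and its skeleton (factor $\T_n^2-1$ over $\C$, split the distinct zeros by parity of multiplicity, put the odd-multiplicity ones into a monic $\HH_{2\ell}$ and absorb the rest into $\U_{n-\ell}^2$, then get uniqueness from unique factorization plus the leading-coefficient normalization) is exactly the paper's, which simply invokes the fundamental theorem of algebra for $Q_{2n}:=\T_n^2-1$. The one substantive step where you genuinely diverge is the proof that $\ell\geq 1$. You argue that if every zero of $\T_n^2-1$ had even multiplicity then $\T_n^2-1=R^2$ for some polynomial $R$, whence $(\T_n-R)(\T_n+R)=1$ forces both factors to be nonzero constants and hence $\T_n$ constant --- a clean algebraic argument that works over any field of characteristic $\neq 2$ (in particular it never touches derivatives). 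The paper instead counts zeros of $Q_{2n}'=2\,\T_n\,\T_n'$: each zero of $Q_{2n}$ of multiplicity $2k$ is a zero of $Q_{2n}'$ of multiplicity $2k-1\geq k$ and is not a zero of $\T_n$, so $\T_n'$ would acquire at least $n$ zeros, contradicting $\deg\T_n'=n-1$. Both are valid; yours is arguably shorter and more self-contained, while the paper's derivative count is of a piece with the multiplicity bookkeeping used elsewhere (e.g.\ in the proof of Lemma~\ref{Lem-1}\,(v),(vi)). One small caveat: your parenthetical ``alternatively one just notes $\ell\le n$ and the parity argument already guarantees nonemptiness'' is not a valid alternative --- parity only shows the number of odd-multiplicity zeros is \emph{even}, which is consistent with it being zero --- so you should delete that aside and rely on the perfect-square argument, which does the job.
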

\begin{proof}
The assertion follows immediately by the fundamental theorem of algebra for the polynomial $Q_{2n}(z):=\T_n^2(z)-1=c_n^2z^{2n}+\ldots\in\PP_{2n}$, where $2\ell$ is the number of distinct zeros of $Q_{2n}$ with odd multiplicity. It only remains to show that the case $\ell=0$ is not possible. If $\ell=0$, then all zeros of $Q_{2n}$ are of even multiplicity. Thus there are at least $n$ zeros (counted with multiplicity) of $Q_{2n}'$ which are also zeros of $Q_{2n}$ but not zeros of $\T_n$. Since $Q_{2n}'(z)=2\,\T_n(z)\,\T_n'(z)$, there are at least $n$ zeros (counted with multiplicity) of $\T_n'$, which is a contradiction.
\end{proof}


Let us point out that the polynomial equation \eqref{TU} (sometimes called Pell equation) is the starting point for investigations concerning minimal or orthogonal polynomials on several intervals, see, e.g., \cite{Bogatyrev}, \cite{Peh-1993}, \cite{Peh-1996}, \cite{Peh-2001}, \cite{PehSch-1999}, \cite{SoYu-1995}, and \cite{Totik-2001}.\\
\indent In \cite[Theorem\,3]{PehSch-2004}, we proved that the polynomial equation \eqref{TU} (for $\ell=2$) is equivalent to the fact that $\T_n^{-1}([-1,1])$ consists of $2$ Jordan arcs (not necessarily analytic), compare also \cite{Pakovich-1995}. The condition and the proof can be easily extended to the general case of $\ell$ arcs, compare also \cite[Remark after Corollary\,2.2]{Peh-2003}. In addition, we give an alternative proof similar to that of Theorem\,\ref{Thm-NuArcs}.


\begin{theorem}\label{Thm-EllArcs}
Let $\T_n\in\PP_n$ be any polynomial of degree $n$. Then $\T_n^{-1}([-1,1])$ consists of $\ell$ (but not less than $\ell$) Jordan arcs with endpoints $a_1,a_2,\ldots,a_{2\ell}$ if and only if $\T_n^2-1$ has exactly $2\ell$ pairwise distinct zeros $a_1,a_2,\ldots,a_{2\ell}$, $1\leq\ell\leq{n}$, of odd multiplicity, i.e., if and only if $\T_n$ satisfies a polynomial equation of the form \eqref{TU} with $\HH_{2\ell}$ given in \eqref{H}.
\end{theorem}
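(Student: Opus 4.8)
The plan is to mimic the proof of Theorem~\ref{Thm-NuArcs}, but to start from the $n$ (in general non-analytic) Jordan arcs $\Gamma_1,\dots,\Gamma_n$ furnished by Lemma~\ref{Lem-1}~(vi) rather than from the $n$ analytic arcs $C_1,\dots,C_n$ of Lemma~\ref{Lem-1}~(v). The point is that, for non-analytic Jordan arcs, the obstruction present in the analytic case disappears: whereas two arcs $C_{i_1},C_{i_2}$ could be merged into one \emph{analytic} arc only when they lay on a common analytic branch of $\T_n^{-1}(\R)$ --- so that the merging was possible only at zeros of \emph{even} multiplicity --- two of the arcs $\Gamma_{i_1},\Gamma_{i_2}$ may be concatenated into a single Jordan arc as soon as they have a common endpoint and no further common point. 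I would obtain that last proviso from Lemma~\ref{Lem-1}~(viii): if $\Gamma_{i_1}$ and $\Gamma_{i_2}$ had a second common point $w$ besides the shared endpoint $z_0$, then a subarc of $\Gamma_{i_1}$ from $z_0$ to $w$ together with a subarc of $\Gamma_{i_2}$ from $w$ to $z_0$ would contain a Jordan curve lying in $S:=\T_n^{-1}([-1,1])$; since $S$ is bounded, such a curve would separate the plane, whence $\C\setminus S$ would be disconnected, contradicting~(viii). The same observation shows that no concatenation of arcs of $S$ can ever close up into a loop.

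With this in hand the mergings are carried out just as in Theorem~\ref{Thm-NuArcs}. By Lemma~\ref{Lem-1}~(vi), a zero of $\T_n^2-1$ of multiplicity $m$ is the common endpoint of exactly $m$ of the arcs $\Gamma_j$. Write the zeros of $\T_n^2-1$ of even multiplicity as $d_1,\dots,d_k$, with multiplicities $2\alpha_1,\dots,2\alpha_k$, and those of odd multiplicity as $a_1,\dots,a_{2\ell}$, with multiplicities $2\beta_1-1,\dots,2\beta_{2\ell}-1$, so that $\sum_i 2\alpha_i+\sum_j(2\beta_j-1)=2n$. At each $d_i$ I would pair up and concatenate all $2\alpha_i$ arcs ending there --- that is $\alpha_i$ mergings, after which $d_i$ is an interior point of every arc through it; at each $a_j$ I would pair up and concatenate $2\beta_j-2$ of the $2\beta_j-1$ arcs ending there --- that is $\beta_j-1$ mergings, leaving exactly one arc-endpoint at $a_j$. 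Each merging lowers the number of arcs by one and, by the remark above, never creates a closed curve, so one ends with a genuine family of Jordan arcs. Since the total number of mergings is $\sum_i\alpha_i+\sum_j(\beta_j-1)=n-\ell$, the set $S$ is the union of exactly $\ell$ Jordan arcs, whose $2\ell$ endpoints are by construction precisely $a_1,\dots,a_{2\ell}$. This proves the sufficiency, including that $\ell$ arcs are enough.

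For the minimality I would argue that in a small disc about $a_j$ the set $S$ consists of exactly $2\beta_j-1$ arcs issuing from $a_j$, namely the $2\beta_j-1$ among the $2(2\beta_j-1)$ local branches of $\T_n^{-1}(\R)$ through $a_j$ along which $\T_n$ takes values in $[-1,1]$. Consequently, in any representation $S=J_1\cup\dots\cup J_r$ by Jordan arcs, the number of the $J_s$ having $a_j$ as an endpoint is congruent to $2\beta_j-1$ modulo $2$, hence is odd, hence positive; so each of the $2\ell$ points $a_1,\dots,a_{2\ell}$ is an endpoint of some $J_s$, which forces $2r\ge 2\ell$, i.e.\ $r\ge\ell$. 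The equivalence asserted in the theorem then follows, exactly as in Theorem~\ref{Thm-NuArcs}: for every $\T_n\in\PP_n$ the number of pairwise distinct zeros of $\T_n^2-1$ of odd multiplicity is a well-defined even integer $2\ell$, and we have just shown that this $\ell$ coincides with the minimal number of Jordan arcs whose union is $S$.

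The step I expect to be the main obstacle is making the iterated concatenation fully rigorous: one must ensure that, for arbitrary pairings at the branch points, the sets produced are genuinely \emph{simple} arcs (no self-intersections) and that no loop is ever created. I would reduce all of this to Lemma~\ref{Lem-1}~(viii), observing that any self-crossing of a concatenated arc --- or any cycle among the arcs $\Gamma_j$ --- would exhibit a Jordan curve inside the bounded set $S$, contradicting the connectedness of $\C\setminus S$. (Equivalently, one may note once and for all that by~(viii) the set $S$ is a finite plane forest, so that the minimal number of Jordan arcs covering $S$ equals half the number of its vertices of odd degree, which is $\ell$.)
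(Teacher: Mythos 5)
Your sufficiency argument is, in substance, the paper's own proof: it starts from the $n$ monotone Jordan arcs $\Gamma_1,\dots,\Gamma_n$ of Lemma\,\ref{Lem-1}\,(vi), performs $\alpha_i$ concatenations at each even-multiplicity zero $d_i$ and $\beta_j-1$ at each odd-multiplicity zero $a_j$, and counts $\sum_i\alpha_i+\sum_j(\beta_j-1)=n-\ell$ mergings; your additional appeal to Lemma\,\ref{Lem-1}\,(viii) to rule out second intersection points and loops is a sensible way to make these concatenations rigorous (the paper takes this for granted). Where you genuinely diverge is the ``not less than $\ell$'' part: the paper settles necessity by the uniqueness of $\ell$ in Lemma\,\ref{Lemma-PolEq}, whereas you attempt a direct lower bound via a parity count at the points $a_j$.

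That parity count has a real gap. For an arbitrary representation $S=J_1\cup\dots\cup J_r$ by Jordan arcs (arcs allowed to overlap along subarcs) the claim ``the number of $J_s$ having $a_j$ as an endpoint is $\equiv 2\beta_j-1 \pmod 2$'' is false. Take $\T_3(z)=1+z^3$: here $S$ is a tripod, three segments $A,B,C$ emanating from $0$, which is a zero of $\T_3-1$ of multiplicity $3$. With $J_1=A\cup B$ and $J_2=B\cup C$ both are Jordan arcs and $J_1\cup J_2=S$, yet $0$ is an endpoint of none of them and the free end of $B$ is an endpoint of two, so both counts are even. Worse, if overlapping arcs are admitted the conclusion $r\ge\ell$ itself can fail: for a degree-$6$ polynomial with two critical values whose inverse image is an H-shaped plane tree (two zeros of $\T_6-1$ of multiplicity $3$ joined through a double zero of $\T_6+1$, plus four simple zeros as free ends --- such polynomials exist by the correspondence between bipartite plane trees and polynomials with critical values $\pm1$) one has $\ell=3$, but $S$ is the union of the two simple paths joining opposite free ends, which share the crossbar, i.e.\ $r=2$. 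So your minimality argument only works if you restrict the admissible representations, say to Jordan arcs with pairwise finite intersection (no two arcs sharing a subarc), which is the reading implicit in the paper's construction via Lemma\,\ref{Lem-1}\,(vi). Under that hypothesis the parity argument can be repaired: one shows that each of the $2\beta_j-1$ local branches of $S$ at $a_j$ has a terminal segment contained in exactly one $J_s$ (two arcs containing terminal segments of the same branch would have infinite intersection), and each $J_s$ through $a_j$ absorbs one or two branches according as $a_j$ is an endpoint of $J_s$ or not, whence the endpoint count at $a_j$ is odd. You should state this non-overlapping hypothesis explicitly and prove the ``one arc per branch'' step; as written, the minimality half of your proof does not stand, and with it the necessity direction --- unless you fall back, as the paper does, on Lemma\,\ref{Lemma-PolEq}.
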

\begin{proof}
By Lemma\,\ref{Lem-1}\,(vi), $\T_n^{-1}([-1,1])$ consists of $n$ Jordan arcs $\Gamma_1,\Gamma_2,\dots,\Gamma_n$, which can be combined into $\ell$ Jordan arcs in the following way: Let $d_1,d_2,\ldots,d_k$ be those zeros of $\T_n^2-1$ with \emph{even} multiplicities $2\alpha_1,2\alpha_2,\ldots,2\alpha_k$ and let, as assumed in the Theorem, $a_1,a_2,\ldots,a_{2\ell}$ be those zeros of $\T_n^2-1$ with \emph{odd} multiplicities $2\beta_1-1,2\beta_2-1,\ldots,2\beta_{2\ell}-1$, where
\begin{equation}\label{SumMult}
2\alpha_1+2\alpha_2+\ldots+2\alpha_k+(2\beta_1-1)+(2\beta_2-1)+\ldots+(2\beta_{2\ell}-1)=2n
\end{equation}
holds. By Lemma\,\ref{Lem-1}\,(vi), at each point $d_j$, the $2\alpha_j$ Jordan arcs can be combined into $\alpha_j$ Jordan arcs, $j=1,2,\ldots,\nu$, and at each point $a_j$, the $2\beta_j-1$ Jordan arcs can be combined into $\beta_j$ Jordan arcs, $j=1,2,\ldots,2\ell$. Altogether, the number of such combinations, using \eqref{SumMult}, is
\[
\alpha_1+\alpha_2+\ldots+\alpha_{\nu}+(\beta_1-1)+(\beta_2-1)+\ldots+(\beta_{2\ell}-1)=(n+\ell)-2\ell=n-\ell,
\]
i.e., the total number $n$ of Jordan arcs is reduced by $n-\ell$, thus $\ell$ Jordan arcs remain and the sufficiency part is proved. Since, by Lemma\,\ref{Lemma-PolEq}, for each polynomial $\T_n\in\PP_n$ there is a unique $\ell\in\{1,2,\ldots,n\}$ such that $\T_n^2-1$ has exactly $2\ell$ distinct zeros of odd multiplicity, the necessity part is clear.
\end{proof}


\begin{example}
Similar as after the proof of Theorem\,\ref{Thm-NuArcs}, let us illustrate the combination of Jordan arcs by the polynomial of Example\,\ref{Ex}. Taking a look at Fig.\,\ref{Fig_InverseImageGeneral}, one can easily identitfy the $n=9$ Jordan arcs $\Gamma_1,\Gamma_2,\ldots,\Gamma_n\in\T_n^{-1}([-1,1])$, where each arc $\Gamma_j$ runs from a disk to a circle. Note that the two arcs, which cross at $z\approx0.3$, may be chosen in two different ways. Now, $\T_n^2-1$ has the zero $d_1=0$ with multiplicity $2\alpha_1=2$, the zero $d_2=2$ with multiplicity $2\alpha_2=4$, and a zero $a_1=1$ with multiplicity $2\beta_1-1=3$, all other zeros $a_j$ have multiplicity $2\beta_j-1=1$, $j=2,3,\ldots,2\ell$. Thus, it is possible to have one combination at $d_1=0$, two combinations at $d_2=2$ and one combination of Jordan arcs at $a_1=1$. Altogether, we obtain $\alpha_1+\alpha_2+(\beta_1-1)=4=n-\ell$ combinations and the number of Jordan arcs is $\ell=5$.
\end{example}


For the sake of completeness, let us mention two simple special cases, first the case $\ell=1$, see, e.g., \cite[Remark\,4]{PehSch-2004}, and second, the case when all endpoints $a_1,a_2,\ldots,a_{2\ell}$ of the arcs are real, see \cite{Peh-1993}.


\begin{corollary}
Let $\T_n\in\PP_n$.
\begin{enumerate}
\item $\T_n^{-1}([-1,1])$ consists of $\ell=1$ Jordan arc with endpoints $a_1,a_2\in\C$, $a_1\neq{a}_2$, if and only if $\T_n$ is the classical Chebyshev polynomial of the first kind (suitable normed), i.e., $\T_n(z)=T_n((2z-a_1-a_2)/(a_2-a_1))$, where $T_n(z):=\cos(n\arccos{z})$. In this case, $\T_n^{-1}([-1,1])$ is the complex interval $[a_1,a_2]$.
\item $\T_n^{-1}([-1,1])=[a_1,a_2]\cup[a_3,a_4]\cup\ldots\cup[a_{2\ell-1},a_{2\ell}]$, $a_1,a_2,\ldots,a_{2\ell}\in\R$, $a_1<a_2<\ldots<a_{2\ell}$, if and only if $\T_n$ satisfies the polynomial equation \eqref{TU} with $\HH_{2\ell}$ as in \eqref{H} and $a_1,a_2,\ldots,a_{2\ell}\in\R$, $a_1<a_2<\ldots<a_{2\ell}$.
\end{enumerate}
\end{corollary}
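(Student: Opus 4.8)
The plan is to derive both statements from Theorem~\ref{Thm-EllArcs}: reading the Pell--type equation \eqref{TU} off the shape of $\T_n^{-1}([-1,1])$, and then pinning down the polynomial itself by classical facts about the solution set of $X^2-\HH_{2\ell}Y^2=1$ over $\C[z]$ (supplemented, for (ii), by a sign analysis on $\R$ and a Green's--function argument). Throughout I would use Lemma~\ref{Lem-1}(ix) to normalize the endpoints by an affine substitution.

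For part~(i), the ``if'' direction is short: $T_n(z)=\cos(n\arccos z)$ satisfies $|T_n(x)|>1$ for real $x\notin[-1,1]$ and $T_n(z)\notin[-1,1]$ for $z\notin\R$, so $T_n^{-1}([-1,1])=[-1,1]$, and Lemma~\ref{Lem-1}(ix) with the affine map $z\mapsto(2z-a_1-a_2)/(a_2-a_1)$, which sends $[-1,1]$ onto the straight segment $[a_1,a_2]$, gives $\T_n^{-1}([-1,1])=[a_1,a_2]$, one analytic Jordan arc. For the ``only if'' direction, suppose $\T_n^{-1}([-1,1])$ is a single Jordan arc with endpoints $a_1\neq a_2$. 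By Lemma~\ref{Lem-1}(ix) I may assume $a_1=-1$, $a_2=1$; then Theorem~\ref{Thm-EllArcs} with $\ell=1$ yields $\T_n^2(z)-1=(z^2-1)\,\U_{n-1}^2(z)$, i.e.\ $(\T_n,\U_{n-1})$ solves the classical Pell equation $X^2-(z^2-1)Y^2=1$ in $\C[z]$. Since $z+\sqrt{z^2-1}$ is a fundamental solution and the term of $(z+\sqrt{z^2-1})^k$ free of $\sqrt{z^2-1}$ is $T_k(z)$, every solution has $X=\pm T_k$ for some $k\geq 0$; comparing degrees forces $\T_n=\pm T_n$, hence $\T_n$ is the (suitably normed) Chebyshev polynomial and $\T_n^{-1}([-1,1])=[-1,1]$ is the complex interval. (Alternatively, one may invoke Lemma~\ref{Lem-1}(x), which gives real coefficients, $n$ simple real zeros in $[-1,1]$ and critical values of modulus $\geq 1$; together with $\T_n([-1,1])\subseteq[-1,1]$ this forces the interior critical values to be $\pm 1$, so $\T_n$ equioscillates at $n+1$ points and equals $\pm T_n$.)

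For part~(ii), the ``only if'' direction is immediate from Theorem~\ref{Thm-EllArcs}: if $\T_n^{-1}([-1,1])=[a_1,a_2]\cup\dots\cup[a_{2\ell-1},a_{2\ell}]$ with $a_1<\dots<a_{2\ell}$, this set is $\ell$ Jordan arcs with endpoints $a_1,\dots,a_{2\ell}$, so \eqref{TU} holds with $\HH_{2\ell}$ as in \eqref{H} and all $a_j\in\R$ in the stated order. For the ``if'' direction, assume \eqref{TU}--\eqref{H} with $a_1<\dots<a_{2\ell}$ real; by Theorem~\ref{Thm-EllArcs}, $\T_n^{-1}([-1,1])$ already consists of exactly $\ell$ Jordan arcs with endpoints $a_1,\dots,a_{2\ell}$, and it remains to identify these arcs as the segments $[a_{2j-1},a_{2j}]$. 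I would argue in three steps. \emph{(a) Real coefficients.} The polynomial $\overline{\T_n(\bar z)}$ satisfies the same Pell equation $X^2-\HH_{2\ell}Y^2=1$ (since $\HH_{2\ell}\in\R[z]$), and any two solutions of a Pell equation over $\C[z]$ of the same degree agree up to sign, so $\overline{\T_n(\bar z)}=\pm\T_n$; the case $\overline{\T_n(\bar z)}=-\T_n$ (i.e.\ $\ii\T_n\in\R[z]$, so $\T_n(x)^2\leq 0$ on $\R$) contradicts $\T_n(x)^2-1=\HH_{2\ell}(x)\,\U_{n-\ell}^2(x)\geq 0$ on $(a_{2\ell-1},a_{2\ell})$, where $\HH_{2\ell}<0$; hence $\T_n\in\R[z]$, and then $\U_{n-\ell}\in\R[z]$ too (its square lies in $\R[z]$ and its leading coefficient $c_n$ is real). \emph{(b) Sign analysis.} Setting $E:=\bigcup_{j=1}^{\ell}[a_{2j-1},a_{2j}]$, one has $\HH_{2\ell}\leq 0$ exactly on $E$, so $\T_n(x)^2-1=\HH_{2\ell}(x)\,\U_{n-\ell}^2(x)\leq 0$ on $E$, giving $E\subseteq\T_n^{-1}([-1,1])$. \emph{(c) Green's function.} Fixing $\sqrt{\HH_{2\ell}}$ on $\C\setminus E$ so that $\sqrt{\HH_{2\ell}(z)}\sim z^\ell$ at $\infty$ (single-valued, as every loop in $\C\setminus E$ encircles an even number of the $a_j$), the function $F:=\T_n+\U_{n-\ell}\sqrt{\HH_{2\ell}}$ is analytic and non-vanishing on the connected set $\C\setminus E$, satisfies $|F(x)|^2=\T_n(x)^2-\HH_{2\ell}(x)\,\U_{n-\ell}^2(x)=1$ on $E$, and $\tfrac1n\log|F(z)|=\log|z|+O(1)$ near $\infty$; hence $\tfrac1n\log|F|$ is the Green's function of $\C\setminus E$ with pole at $\infty$, so $|F|>1$ strictly on $\C\setminus E$. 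Since $\T_n(z)\in[-1,1]$ forces $|F(z)|=1$, this gives $\T_n^{-1}([-1,1])\subseteq E$, and with (b), $\T_n^{-1}([-1,1])=E$. (One may instead cite \cite{Peh-1993} for the whole ``if'' direction.)

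The only step that is not mere bookkeeping on top of Theorem~\ref{Thm-EllArcs} and Lemma~\ref{Lem-1} is (a), i.e.\ the uniqueness up to sign of the degree-$n$ solution of the Pell equation $X^2-\HH_{2\ell}Y^2=1$; this I expect to be the main obstacle (and it, together with step~(c), is exactly what \cite{Peh-1993} supplies in the real-interval case).
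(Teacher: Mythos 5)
The paper itself gives no proof of this corollary: after Theorem~\ref{Thm-EllArcs} it simply refers to \cite[Remark~4]{PehSch-2004} for part (i) and to \cite{Peh-1993} for part (ii). Your proposal is a genuine proof and its skeleton is sound, and it is essentially the content of those references: Theorem~\ref{Thm-EllArcs} converts the geometric hypothesis into the Pell equation \eqref{TU}; for (i), the classification of the solutions of $X^2-(z^2-1)Y^2=1$ in $\C[z]$ as $\pm\bigl(T_k+U_{k-1}\sqrt{z^2-1}\bigr)^{\pm1}$ pins down $\T_n$ after the affine normalization via Lemma~\ref{Lem-1}(ix); for (ii), realness of the coefficients, the sign pattern of $\HH_{2\ell}$ on $E=\bigcup_j[a_{2j-1},a_{2j}]$, and the identification of $\tfrac1n\log\bigl|\T_n+\U_{n-\ell}\sqrt{\HH_{2\ell}}\bigr|$ with the Green's function of $\C\setminus E$ yield $\T_n^{-1}([-1,1])=E$; all of these steps check out.

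Two caveats, neither fatal. First, the one nontrivial input --- uniqueness up to sign of a $\C[z]$-solution of $X^2-\HH_{2\ell}Y^2=1$ of prescribed degree (equivalently, the classification used in (i)) --- is asserted rather than proved; it is true and classical (e.g., examine $(X_1+Y_1\sqrt{\HH_{2\ell}})(X_2-Y_2\sqrt{\HH_{2\ell}})$ at the two points over $\infty$), and you correctly flag it as exactly what the cited literature supplies, so you are not below the paper's own level of rigor here. Second, two local repairs: in step (a), the claim $\HH_{2\ell}(x)\U_{n-\ell}^2(x)\geq0$ on $(a_{2\ell-1},a_{2\ell})$ is unjustified as written --- you must first note that $\overline{\T_n(\bar z)}=-\T_n$ forces $\U_{n-\ell}$ to be purely imaginary as well (via the leading coefficient), or, much simpler, observe that $\ii\T_n\in\R[z]$ gives $\T_n(a_1)^2\leq0$, contradicting $\T_n(a_1)^2=1$; and in (i) your Pell argument delivers $\T_n=\pm T_n$ after normalization, and for even $n$ the minus sign is not absorbed by swapping $a_1$ and $a_2$, so the stated formula holds only up to the sign that the paper hides in the phrase ``suitable normed''.
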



Let us consider the case of $\ell=2$ Jordan arcs in more detail. Given four pairwise distinct points $a_1,a_2,a_3,a_4\in\C$ in the complex plane, define
\begin{equation}\label{H2}
\HH_4(z):=(z-a_1)(z-a_2)(z-a_3)(z-a_4),
\end{equation}
and suppose that $\T_n(z)=c_nz^n+\ldots\in\PP_n$ satisfies a polynomial equation of the form
\begin{equation}\label{TU2}
\T_n^2(z)-1=\HH_4(z)\,\U_{n-2}^2(z)
\end{equation}
with $\U_{n-2}(z)=c_nz^{n-2}+\ldots\in\PP_{n-2}$. Then, by \eqref{TU2}, there exists a $z^*\in\C$ such that the derivative of $\T_n$ is given by
\begin{equation}\label{z*}
\T_n'(z)=n(z-z^*)\,\U_{n-2}(z).
\end{equation}
By Theorem\,\ref{Thm-EllArcs}, $\T_n^{-1}([-1,1])$ consists of two Jordan arcs. Moreover, it is proved in \cite[Theorem\,3]{PehSch-2004} that the two Jordan arcs are crossing each other if and only if $z^*\in\T_n^{-1}([-1,1])$ (compare also Theorem\,\ref{Theorem-Connectedness}). In this case, $z^*$ is the only crossing point. Interestingly, the minimum number of analytic Jordan arcs is not always two, as the next theorem says. In order to prove this result, we need the following lemma\,\cite[Lemma\,1]{PehSch-2004}.


\begin{lemma}\label{Lemma-PehSch}
Suppose that $\T_n\in\PP_n$ satisfies a polynomial equation of the form \eqref{TU2}, where $\HH_4$ is given by \eqref{H2}, and let $z^*$ be given by \eqref{z*}.
\begin{enumerate}
\item If $z^*$ is a zero of $\U_{n-2}$ then it is either a double zero of $\U_{n-2}$ or a zero of $\HH$.
\item If $z^*$ is a zero of $\HH$ then $z^*$ is a simple zero of $\U_{n-2}$.
\item The point $z^*$ is the only possible common zero of $\HH$ and $\U_{n-2}$.
\item If $\U_{n-2}$ has a zero $y^*$ of order greater than one then $y^*=z^*$ and $z^*$ is a double zero of $\U_{n-2}$.
\end{enumerate}
\end{lemma}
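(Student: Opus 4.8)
The plan is to reduce everything to a single polynomial identity and then argue by comparing orders of vanishing. First I would differentiate the Pell-type equation \eqref{TU2}, obtaining $2\T_n\T_n'=\HH_4'\,\U_{n-2}^2+2\HH_4\,\U_{n-2}\,\U_{n-2}'$, insert $\T_n'=n(z-z^*)\U_{n-2}$ from \eqref{z*}, and cancel the common factor $\U_{n-2}$ (legitimate as a polynomial identity, since $\U_{n-2}$ has leading coefficient $c_n\neq0$ and hence is not identically zero). This gives
\begin{equation}
2n(z-z^*)\,\T_n(z)=\HH_4'(z)\,\U_{n-2}(z)+2\,\HH_4(z)\,\U_{n-2}'(z).\tag{$\star$}
\end{equation}
Alongside $(\star)$ I would record the elementary observation that $\T_n$ vanishes neither at a zero of $\HH_4$ nor at a zero of $\U_{n-2}$, since at any such point \eqref{TU2} forces $\T_n^2=1$. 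Consequently the left-hand side of $(\star)$ vanishes to order exactly $1$ at $z^*$ and to order $0$ at every other zero of $\HH_4\,\U_{n-2}$.

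The second step is a local order count for the right-hand side of $(\star)$ at a zero $y^*$ of $\U_{n-2}$ of multiplicity $k\ge1$. Writing $\U_{n-2}(z)=(z-y^*)^kW(z)$ with $W(y^*)\neq0$, a direct expansion gives, when $\HH_4(y^*)\neq0$, that the right-hand side equals $(z-y^*)^{k-1}\bigl(2k\,\HH_4(y^*)W(y^*)+O(z-y^*)\bigr)$, hence vanishes to order exactly $k-1$; and, since the four zeros $a_1,\dots,a_4$ of $\HH_4$ are pairwise distinct, in the case $\HH_4(y^*)=0$ we may write $\HH_4(z)=(z-y^*)G(z)$ with $G(y^*)\neq0$ and the right-hand side becomes $(z-y^*)^k\bigl((2k+1)G(y^*)W(y^*)+O(z-y^*)\bigr)$, so it vanishes to order exactly $k$.

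With $(\star)$ and these order counts in hand the four assertions would follow by matching orders. For (iii): if $y^*$ were a common zero of $\HH_4$ and $\U_{n-2}$, then the right-hand side of $(\star)$ vanishes at $y^*$, hence so does the left-hand side; since $\T_n(y^*)\neq0$ this forces $y^*=z^*$. For (i): if $z^*$ is a zero of $\U_{n-2}$ with $\HH_4(z^*)\neq0$, say of multiplicity $k$, matching orders in $(\star)$ gives $k-1=1$, i.e.\ $z^*$ is a double zero of $\U_{n-2}$; and if $\HH_4(z^*)=0$ the conclusion is immediate. For (ii): if $z^*$ is a zero of $\HH_4$, evaluating $(\star)$ at $z^*$ gives $\HH_4'(z^*)\U_{n-2}(z^*)=0$, and since $z^*$ is a simple zero of $\HH_4$ (so $\HH_4'(z^*)\neq0$) we get $\U_{n-2}(z^*)=0$; if its multiplicity is $k$, matching orders gives $k=1$. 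For (iv): if $\U_{n-2}$ had a zero $y^*$ of multiplicity $k\ge2$, then $\HH_4(y^*)=0$ is impossible (it would give $y^*=z^*$ by (iii) and then $k=1$ by (ii)), so $\HH_4(y^*)\neq0$, in which case the right-hand side of $(\star)$ vanishes to order $k-1\ge1$ at $y^*$, forcing $y^*=z^*$, and then matching orders gives $k=2$.

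I expect the only point requiring genuine care to be the order bookkeeping on the right-hand side of $(\star)$ — specifically, keeping the cases ``$y^*$ a zero of $\HH_4$'' and ``$y^*$ not a zero of $\HH_4$'' strictly apart, since they yield the orders $k$ and $k-1$ respectively. That distinction is exactly what separates the simple-zero conclusion of (ii) from the double-zero conclusions of (i) and (iv); everything else is a direct consequence of $(\star)$ together with the non-vanishing of $\T_n$ at zeros of $\HH_4\,\U_{n-2}$.
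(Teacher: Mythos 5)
Your proof is correct. Note that the paper itself gives no proof of this lemma --- it is imported verbatim as \cite[Lemma\,1]{PehSch-2004} --- so there is no in-paper argument to compare against; what you have written is a complete, self-contained derivation, and it follows the natural (and, as far as I can tell, the original) route: differentiate the Pell equation \eqref{TU2}, cancel the factor $\U_{n-2}$ using \eqref{z*} to obtain the identity $2n(z-z^*)\T_n=\HH_4'\,\U_{n-2}+2\HH_4\,\U_{n-2}'$, observe that $\T_n$ cannot vanish at any zero of $\HH_4\,\U_{n-2}$ because of \eqref{TU2}, and then match orders of vanishing on the two sides. Your two local order counts (order exactly $k-1$ at a $k$-fold zero of $\U_{n-2}$ off the zero set of $\HH_4$, order exactly $k$ at a common zero, using that the $a_j$ are pairwise distinct so zeros of $\HH_4$ are simple) are both right, the leading coefficients $2k\,\HH_4(y^*)W(y^*)$ and $(2k+1)G(y^*)W(y^*)$ are indeed nonzero, and the deduction of (i)--(iv) from these counts is sound, including the slightly delicate point in (ii) that evaluation at $z^*$ first forces $\U_{n-2}(z^*)=0$ before the order comparison pins the multiplicity to one. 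I see no gap.
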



\begin{theorem}
Suppose that $\T_n\in\PP_n$ satisfies a polynomial equation of the form \eqref{TU2}, where $\HH_4$ is given by \eqref{H2}, and let $z^*$ be given by \eqref{z*}. If $z^*\notin\{a_1,a_2,a_3,a_4\}$ then $\T_n^{-1}([-1,1])$ consists of two analytic Jordan arcs. If $z^*\in\{a_1,a_2,a_3,a_4\}$ then $\T_n^{-1}([-1,1])$ consists of three analytic Jordan arcs, all with one endpoint at $z^*$, and an angle of $2\pi/3$ between two arcs at $z^*$.
\end{theorem}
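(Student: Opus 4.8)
The plan is to split into the two cases according to whether $z^*$ is one of the four points $a_1,a_2,a_3,a_4$ and in each case count the zeros of $\T_n^2-1$ of odd multiplicity, then invoke Theorem~\ref{Thm-NuArcs}.

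First consider the case $z^*\notin\{a_1,a_2,a_3,a_4\}$. From \eqref{TU2} and \eqref{z*}, together with Lemma~\ref{Lemma-PehSch}(iv), the multiplicity structure of $\T_n^2-1$ is determined: each $a_j$ is a zero of $\HH_4$ of multiplicity exactly one, and since $z^*$ is not among the $a_j$, Lemma~\ref{Lemma-PehSch}(iv) says any multiple zero of $\U_{n-2}$ must equal $z^*$ and be a double zero there. Hence away from $z^*$ the polynomial $\U_{n-2}^2$ contributes only even multiplicities, and at $z^*$ it contributes multiplicity $2$ or $4$ (even) in $\T_n^2-1$. Thus the zeros of $\T_n^2-1$ of odd multiplicity are precisely $a_1,a_2,a_3,a_4$, each simple, so $2\nu=4$, i.e.\ $\nu=2$, and Theorem~\ref{Thm-NuArcs} gives that $\T_n^{-1}([-1,1])$ consists of two analytic Jordan arcs with endpoints $a_1,a_2,a_3,a_4$.

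Now suppose $z^*\in\{a_1,a_2,a_3,a_4\}$, say $z^*=a_1$. By Lemma~\ref{Lemma-PehSch}(ii), $z^*$ is a simple zero of $\U_{n-2}$, and by Lemma~\ref{Lemma-PehSch}(iii) it is the only common zero of $\HH_4$ and $\U_{n-2}$, while by Lemma~\ref{Lemma-PehSch}(iv) all other zeros of $\U_{n-2}$ are simple. Consequently, in the factorization \eqref{TU2}, the point $z^*=a_1$ has multiplicity $1$ from $\HH_4$ plus $2$ from $\U_{n-2}^2$, i.e.\ multiplicity $3$ in $\T_n^2-1$; the points $a_2,a_3,a_4$ have multiplicity exactly $1$; and every remaining zero of $\T_n^2-1$ (a zero of $\U_{n-2}$ different from $z^*$) has even multiplicity $2$. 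Hence the zeros of odd multiplicity, written according to multiplicity, are $(z^*,z^*,z^*,a_2,a_3,a_4)$, so $2\nu=6$ and $\nu=3$. Theorem~\ref{Thm-NuArcs} then yields that $\T_n^{-1}([-1,1])$ consists of exactly three analytic Jordan arcs, all having $z^*$ as a common endpoint (this is the case $m=3$, $a_1=z^*$, in Lemma~\ref{Lem-1}\,(v)). Finally, since $z^*$ is a zero of $\T_n^2-1$ of multiplicity $3$, it is a zero of $\T_n'$ of multiplicity $2$, so by Lemma~\ref{Lem-1}\,(iii) the three analytic arcs of $\T_n^{-1}(\R)$ through $z^*$ meet at successive angles $\pi/3$; the three arcs of $\T_n^{-1}([-1,1])$ emanating from $z^*$ are alternate half-arcs of these, hence meet at successive angles $2\pi/3$.

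The main obstacle is bookkeeping the multiplicities correctly in the second case: one must be careful that $z^*$ contributes a $+2$ (not more) from $\U_{n-2}^2$, which is exactly what Lemma~\ref{Lemma-PehSch}(ii),(iv) guarantee ($z^*$ a simple zero of $\U_{n-2}$), and that no other coincidence among zeros of $\HH_4$ and $\U_{n-2}$ occurs, which is Lemma~\ref{Lemma-PehSch}(iii). The angle statement at $z^*$ is then a direct application of Lemma~\ref{Lem-1}\,(iii) with $m=3$, noting that passing from $\T_n^{-1}(\R)$ to $\T_n^{-1}([-1,1])$ keeps, at a saddle point of $\re\{\T_n\}$, exactly one of the two half-arcs of each analytic arc, doubling the angle between consecutive retained half-arcs.
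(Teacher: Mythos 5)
Your proof is correct and follows essentially the same route as the paper: determine the multiplicity structure of $\T_n^2-1$ via Lemma~\ref{Lemma-PehSch}, apply Theorem~\ref{Thm-NuArcs} to count the analytic arcs, and get the angle from Lemma~\ref{Lem-1}\,(iii); the only difference is that you case-split directly on $z^*\in\{a_1,a_2,a_3,a_4\}$ whereas the paper first splits on $\T_n(z^*)\in\{-1,1\}$, which merges the paper's cases 1 and 2.2 into your first case. (Two trivial slips worth fixing: in the first case the contribution of $\U_{n-2}^2$ at $z^*$ is $0$ or $4$, not ``$2$ or $4$'', and you should also cite Lemma~\ref{Lemma-PehSch}\,(iii) there to rule out $a_j$ being a zero of $\U_{n-2}$; neither affects the argument.)
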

\begin{proof}
We distinguish two cases:
\begin{enumerate}
\item[1.] $\T_n(z^*)\notin\{-1,1\}$: By Lemma\,\ref{Lemma-PehSch}, $\T_n^2-1$ has 4 simple zeros $\{a_1,a_2,a_3,a_4\}$ and $n-2$ double zeros. Thus, by Theorem\,\ref{Thm-NuArcs}, $\T_n^{-1}([-1,1])$ consists of two analytic Jordan arcs.
\item[2.] $\T_n(z^*)\in\{-1,1\}$:
\begin{enumerate}
\item[2.1] If $z^*\in\{a_1,a_2,a_3,a_4\}$ then, by Lemma\,\ref{Lemma-PehSch}, $\T_n^2-1$ has 3 simple zeros given by $\{a_1,a_2,a_3,a_4\}\setminus\{z^*\}$, $n-3$ double zeros and one zero of multiplicity 3 (that is $z^*$). Thus, by Theorem\,\ref{Thm-NuArcs}, $\T_n^{-1}([-1,1])$ consists of three analytic Jordan arcs.
\item[2.2] If $z^*\notin\{a_1,a_2,a_3,a_4\}$ then, by Lemma\,\ref{Lemma-PehSch}, $z^*$ is a double zero of $\U_{n-2}$. Thus $\T_n^2-1$ has 4 simple zeros $\{a_1,a_2,a_3,a_4\}$, $n-4$ double zeros and one zero of multiplicity 4 (that is $z^*$). Thus, by Theorem\,\ref{Thm-NuArcs}, $\T_n^{-1}([-1,1])$ consists of two analytic Jordan arcs.
\end{enumerate}
\end{enumerate}
The very last statement of the theorem follows immediately by Lemma\,\ref{Lem-1}\,(iii).
\end{proof}


Let us mention that in \cite{PehSch-2004}, see also \cite{Sch-2007a} and \cite{Sch-2009}, necessary and sufficient conditions for four points $a_1,a_2,a_3,a_4\in\C$ are given with the help of Jacobian elliptic functions such that there exists a polynomial of degree $n$ whose inverse image consists of two Jordan arcs with the four points as endpoints. Concluding this section, let us give two simple examples of inverse polynomial images.


\begin{example}\hfill{}
\begin{enumerate}
\item Let $a_1=-1$, $a_2=-a$, $a_3=a$ and $a_4=1$ with $0<a<1$ and
\[
\HH_4(z)=(z-a_1)(z-a_2)(z-a_3)(z-a_4)=(z^2-1)(z^2-a^2).
\]
If
\[
\T_2(z):=\frac{2z^2-a^2-1}{1-a^2},\quad\U_0(z):=\frac{2}{1-a^2},
\]
then
\[
\T_2^2(z)-\HH_4(z)\U_0^2(z)=1.
\]
Thus, by Theorem\,\ref{Thm-EllArcs}, $\T_2^{-1}([-1,1])$ consists of two Jordan arcs with endpoints $a_1$, $a_2$, $a_3$, $a_4$, more precisely $\T_2^{-1}([-1,1])=[-1,-a]\cup[a,1]$.
\item Let $a_1=\ii$, $a_2=-\ii$, $a_3=a-\ii$ and $a_4=a+\ii$ with $a>0$ and
\[
\HH_4(z)=(z-a_1)(z-a_2)(z-a_3)(z-a_4)=(z^2+1)((z-a)^2+1).
\]
If
\[
\T_2(z):=\frac{\ii}{a}\bigl(z^2-az+1\bigr),\quad\U_0(z):=\frac{\ii}{a},
\]
then
\[
\T_2^2(z)-\HH_4(z)\U_0^2(z)=1.
\]
Thus, by Theorem\,\ref{Thm-EllArcs}, $\T_2^{-1}([-1,1])$ consists of two Jordan arcs with endpoints $a_1$, $a_2$, $a_3$, $a_4$. More precisely, if $0<a<2$,
\[
\T_2^{-1}([-1,1])=\bigl\{x+\ii{y}\in\C:-\frac{(x-a/2)^2}{1-a^2/4}+\frac{y^2}{1-a^2/4}=1\bigr\},
\]
i.e., $\T_2^{-1}([-1,1])$ is an equilateral hyperbola (not crossing the real line) with center at $z_0=a/2$ and asymptotes $y=\pm(x-a/2)$.\\
If $a=2$, $\T_2^{-1}([-1,1])=[\ii,a-\ii]\cup[-\ii,a+\ii]$, i.e., the union of two complex intervals.\\
If $2<a<\infty$,
\[
\T_2^{-1}([-1,1])=\bigl\{x+\ii{y}\in\C:\frac{(x-a/2)^2}{a^2/4-1}-\frac{y^2}{a^2/4-1}=1\bigr\},
\]
i.e., $\T_2^{-1}([-1,1])$ is an equilateral hyperbola with center at $z_0=a/2$, crossing the real line at $a/2\pm\sqrt{a^2/4-1}$ and asymptotes $y=\pm(x-a/2)$.\\
In Fig.\,\ref{Fig_Rectangle}, the sets $\T_2^{-1}([-1,1])$ including the asymptotes are plotted for the three cases discussed above.
\end{enumerate}
\end{example}


\begin{figure}[ht]
\begin{center}
\includegraphics[scale=0.7]{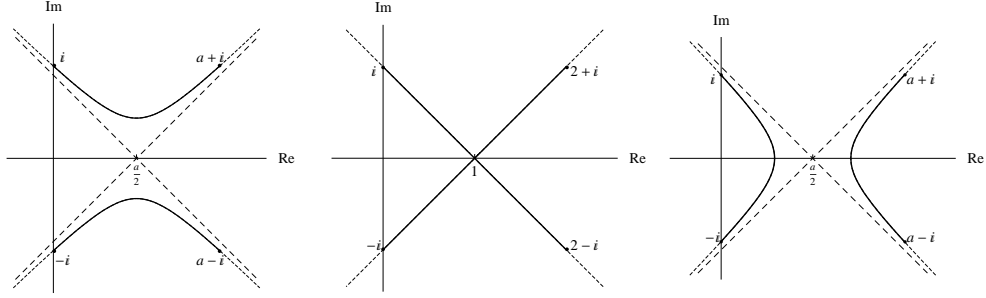}
\caption{\label{Fig_Rectangle} The inverse image $\T_2^{-1}([-1,1])$ for $0<a<2$ (left plot), for $a=2$ (middle plot) and for $a>2$ (right plot)}
\end{center}
\end{figure}

\section{The Connectedness of an Inverse Polynomial Image}


In the next theorem, we give a necessary and sufficient condition such that the inverse image is connected.


\begin{theorem}\label{Theorem-Connectedness}
Let $\T_n\in\PP_n$. The inverse image $\T_n^{-1}([-1,1])$ is connected if and only if all zeros of the derivative $\T_n'$ lie in $\T_n^{-1}([-1,1])$.
\end{theorem}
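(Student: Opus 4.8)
The plan is to prove both implications using the structural description of $\T_n^{-1}([-1,1])$ as a union of $n$ analytic Jordan arcs $C_1,\dots,C_n$ (Lemma~\ref{Lem-1}(v)) together with the arc-combination mechanism from the proofs of Theorems~\ref{Thm-NuArcs} and~\ref{Thm-EllArcs}, and the fact that $\C\setminus\T_n^{-1}([-1,1])$ is connected (Lemma~\ref{Lem-1}(viii)).

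For the direction ``all zeros of $\T_n'$ lie in $S:=\T_n^{-1}([-1,1])$ $\Longrightarrow$ $S$ connected'', I would argue as follows. Recall that by Lemma~\ref{Lem-1}(i) the set $\T_n^{-1}(\R)$ consists of $n$ analytic Jordan arcs $\tilde C_1,\dots,\tilde C_n$ from $\infty$ to $\infty$, which cross exactly at the zeros of $\T_n'$, and that $S$ is obtained from $\T_n^{-1}(\R)$ by cutting each $\tilde C_j$ back to the subarc $C_j$ on which $|\T_n|\le 1$. The key point is that $\T_n^{-1}(\R)$ is connected: any two of the arcs $\tilde C_j$ must intersect, because if $\tilde C_j$ and $\tilde C_k$ were disjoint then the connected curve $\tilde C_j$ (going from $\infty$ to $\infty$) would separate $\tilde C_k$ from part of the plane in a way incompatible with Lemma~\ref{Lem-1}(viii) applied to the complement — more cleanly, one shows that the graph whose vertices are the arcs $\tilde C_j$ and whose edges are the crossing points (zeros of $\T_n'$) is connected, using that $\T_n$ is a degree-$n$ branched cover of $\C$ and the Riemann--Hurwitz count $n-1$ of critical points (with multiplicity) forces the dual graph to be a tree. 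Granting this, I want to show that when every crossing point $z_0$ (a zero of $\T_n'$) actually lies in $S$, i.e.\ satisfies $|\T_n(z_0)|\le 1$, the cutting operation does not disconnect: at $z_0$ the arcs $\tilde C_{i_1},\dots,\tilde C_{i_m}$ meet, and since $\T_n(z_0)\in[-1,1]$ the local pieces of all these arcs near $z_0$ on which $|\T_n|\le1$ still all contain $z_0$, so the crossing is preserved in $S$. Thus the ``arc-adjacency graph'' of $S$ (vertices $C_j$, edges the surviving crossings) contains the spanning tree coming from $\T_n^{-1}(\R)$, so $S$ is connected. I would phrase this via: $S$ is connected iff the graph on $\{C_1,\dots,C_n\}$ with an edge for each common point is connected, and under the hypothesis every crossing of $\T_n^{-1}(\R)$ survives.

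For the converse, suppose some zero $z_0$ of $\T_n'$, of multiplicity $m-1\ge1$, does \emph{not} lie in $S$, so $|\T_n(z_0)|>1$. By Lemma~\ref{Lem-1}(iii) the $m$ arcs $\tilde C_{i_1},\dots,\tilde C_{i_m}$ cross at $z_0$ and nowhere else do these particular arcs necessarily meet each other (Lemma~\ref{Lem-1}(vii) bounds pairwise crossings of the $C_j$ by one, but here the point is that the \emph{only} crossing of $\tilde C_{i_1}$ and $\tilde C_{i_2}$ occurs at $z_0$). I would remove $z_0$ from consideration and show directly that the part of $S$ on $\tilde C_{i_1}$ becomes separated from the part on some other $\tilde C_{i_k}$: because $\T_n(z_0)\notin[-1,1]$, the subarcs $C_{i_1},\dots,C_{i_m}$ of $S$ near $z_0$ are all cut off before reaching $z_0$, hence are pairwise disjoint in a neighborhood of $z_0$; combined with the tree structure, deleting this crossing splits $S$ into at least two components. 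More precisely, in the spanning tree of $\T_n^{-1}(\R)$, the edge(s) at $z_0$ are bridges, and when $z_0\notin S$ these bridges are absent from $S$'s adjacency graph, so that graph is disconnected, hence $S$ is disconnected. The main obstacle I anticipate is making the ``tree/bridge'' argument rigorous: one must be careful that two arcs $C_j,C_k$ of $S$ might still touch at a zero of $\T_n^2-1$ (a shared endpoint), not only at interior crossings, so the adjacency graph of $S$ has edges of two kinds (shared endpoints and interior crossings); one must check that losing the interior-crossing edges at $z_0$ genuinely disconnects, which requires knowing that the endpoints of the $C_j$'s — the zeros of $\T_n^2-1$, which satisfy $|\T_n|=1\le1$ and so always lie in $S$ — do not provide an alternative connection bypassing $z_0$. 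This is handled by the same Riemann--Hurwitz bookkeeping: the total count of identifications (at zeros of $\T_n'$, whether or not they are zeros of $\T_n^2-1$) is exactly $n-1$, which is precisely the number of edges needed to connect $n$ arcs into a tree with no room to spare, so \emph{every} identification is a bridge and dropping any one disconnects.
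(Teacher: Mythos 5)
Your overall strategy --- count the identifications among the $n$ arcs of $S:=\T_n^{-1}([-1,1])$ occurring at zeros of $\T_n'$, note that a zero of multiplicity $m-1$ lying in $S$ supplies exactly $m-1$ identifications, that the total supply is $n-1$, and that $n-1$ acyclic identifications are exactly what is needed to connect $n$ arcs --- is the same counting argument the paper uses. But two structural facts you lean on are false as stated. First, $\T_n^{-1}(\R)$ is \emph{not} connected in general: for $\T_2(z)=z^2+10\ii$ one has $\T_2^{-1}(\R)=\{x+\ii y:2xy+10=0\}$, a hyperbola with two disjoint branches; the critical point $0$ has non-real critical value and so produces no crossing. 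Connectivity of $\T_n^{-1}(\R)$ and the tree structure you invoke hold precisely when all critical values are real --- which follows from your hypothesis in the ``$\Leftarrow$'' direction but is not an unconditional consequence of the Riemann--Hurwitz count, and it is unavailable in the ``$\Rightarrow$'' direction, where you argue with ``the spanning tree of $\T_n^{-1}(\R)$'' while assuming a critical point is missing from $S$ (and possibly from $\T_n^{-1}(\R)$ as well). Second, the description of $S$ as ``cutting each $\tilde C_j$ back to the subarc $C_j$ on which $|\T_n|\le1$'' is wrong in general: $\T_n$ is not monotone along the analytic arcs $\tilde C_j$ (it has local extrema at the even-order crossing points), so $\tilde C_j\cap S$ may have several components; in the paper's own Example~\ref{Ex}, $\tilde C_1=\R$ and $\R\cap S=[-0.215\ldots,1]\cup\{2\}$. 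Hence the vertices of your adjacency graph for $S$ are not in bijection with the $\tilde C_j$, and ``all crossings survive'' does not by itself reconnect the fragments of a single $\tilde C_j$.

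Both defects disappear if you work with the monotone arcs of Lemma~\ref{Lem-1}\,(ii),(vi) instead of the analytic ones, which is exactly what the paper does: since $\T_n$ decreases strictly from $+\infty$ to $-\infty$ along each $\tilde\Gamma_j$, the set $\tilde\Gamma_j\cap S=\Gamma_j$ is automatically a single arc; every common point of two $\Gamma_j$'s is a zero of $\T_n'$ lying in $S$; and a zero of multiplicity $m-1$ in $S$ lies on exactly $m$ of the $\Gamma_j$'s. With that substitution your counting goes through and coincides with the paper's proof. (For ``$\Leftarrow$'' one could instead salvage the analytic-arc picture by noting that under the hypothesis every local extremum of $\T_n$ along $\tilde C_j$ occurs at a zero of $\T_n'$ and hence has value in $[-1,1]$, so that $\tilde C_j\cap S$ is a single arc after all --- but that is an argument you must supply, not a general fact, and it does not help in the converse direction.)
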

\begin{proof}
Let $\Gamma:=\bigl\{\Gamma_1,\Gamma_2,\dots,\Gamma_n\bigr\}$ denote the set of arcs of $\T_n^{-1}([-1,1])$ as in Lemma\,\ref{Lem-1}\,(vi).

``$\Leftarrow$'': Suppose that all zeros of $\T_n'$ lie in $\Gamma$. Let $A_1\in\Gamma$ be such that it contains at least one zero $z_1$ of $\T'_n$ with multiplicity $m_1\geq1$. By Lemma\,\ref{Lem-1}\,(ii), (iv) and (vi), there are $m_1$ additional arcs $A_2,A_3,\dots,A_{m_1+1}\in\Gamma$ containing $z_1$. By Lemma\,\ref{Lem-1}\,(vii),
\[
A_j\cap{A}_k=\{z_1\}~\text{for}~j,k\in\{1,2,\dots,m_1+1\},~j\neq{k}.
\]
Now assume that there is another zero $z_2$ of $\T'_n$, $z_2\neq{z}_1$, with multiplicity $m_2$, on $A_{j^*}$, $j^*\in\{1,2,\dots,m_1+1\}$. Since no arc $A_j$, $j\in\{1,2,\dots,m_1+1\}\setminus\{j^*\}$ contains $z_2$, there are $m_2$ curves $A_{m_1+1+j}\in\Gamma$, $j=1,2,\dots,m_2$, which cross each other at $z_2$ and for which, by Lemma\,\ref{Lem-1}\,(vii),
\[
\begin{aligned}
A_j\cap{A}_k&=\{z_2\} \quad &&\text{for } &&j,k\in\{m_1+2,\dots,m_1+m_2+1\}, j\neq k,\\
A_j\cap{A}_k&=\emptyset &&\text{for } &&j\in\{1,2,\dots,m_1+1\}\setminus\{j^*\},\\
& && &&k\in\{m_1+2,\dots,m_1+m_2+1\}\\
A_{j^*}\cap{A}_k&=\{z_2\} &&\text{for }
&&k\in\{m_1+2,\dots,m_1+m_2+1\}.
\end{aligned}
\]
If there is another zero $z_3$ of $\T_n'$, $z_3\notin\{z_1,z_2\}$, on $A_{j^{**}}$, $j^{**}\in\{1,2,\dots,m_1+m_2+1\}$, of multiplicity $m_3$, we proceed as before.\\
We proceed like this until we have considered all zeros of $\T'_n$ lying on the constructed set of arcs. Thus, we get a connected set of $k^*+1$ curves
\[
A^*:=A_1\cup{A}_2\cup\ldots\cup{A}_{k^*+1}
\]
with $k^*$ zeros of $\T'_n$, counted with multiplicity, on $A^*$.\\
Next, we claim that $k^*=n-1$. Assume that $k^*<n-1$, then, by assumption, there exists a curve $A_{k^*+2}\in\Gamma$, for which
\[
A_{k^*+2}\cap{A}^*=\{\}
\]
and on which there is another zero of $\T'_n$. By the same procedure as before, we get a set $A^{**}$ of $k^{**}+1$ arcs of $\Gamma$ for which $A^*\cap{A}^{**}=\{\}$ and $k^{**}$ zeros of $\T'_n$, counted with multiplicity. If $k^*+k^{**}=n-1$, then we would get a set of $k^*+k^{**}+2=n+1$ arcs, which is a contradiction to Lemma\,\ref{Lem-1}\,(i). If $k^*+k^{**}<n-1$, we proceed analogously and again, we get too many arcs, i.e., a contradiction to Lemma\,\ref{Lem-1}\,(vi). Thus, $k^*=n-1$ must hold and thus $\Gamma$ is connected.

``$\Rightarrow$'': Suppose that $\Gamma$ is connected. Thus, it is possible to reorder $\Gamma_1,\Gamma_2,\ldots,\Gamma_n$ into $\Gamma_{k_1},\Gamma_{k_2},\ldots,\Gamma_{k_n}$ such that $\Gamma_{k_1}\cup\ldots\cup\Gamma_{k_j}$ is connected for each $j\in\{2,\ldots,n\}$. Now we will count the crossing points (common points) of the arcs in the following way: If there are $m+1$ arcs $A_1,A_2,\ldots,A_{m+1}\in\Gamma$ such that $z_0\in{A}_j$, $j=1,2,\ldots,A_{m+1}$, then we will count the crossing point $z_0$ $m$-times, i.e., we say $A_1,\ldots,A_{m+1}$ has $m$ crossing points. Hence, $\Gamma_{k_1}\cup\Gamma_{k_2}$ has one crossing point, $\Gamma_{k_1}\cup\Gamma_{k_2}\cup\Gamma_{k_3}$ has two crossing points, $\Gamma_{k_1}\cup\Gamma_{k_2}\cup\Gamma_{k_3}\cup\Gamma_{k_4}$ has 3 crossing points, and so on. Summing up, we arrive at $n-1$ crossing points which are, by Lemma\,\ref{Lem-1}\,(iv) the zeros of $\T_n'$.
\end{proof}


Theorem\,\ref{Theorem-Connectedness} may be generalized to the question how many connected sets $\T_n^{-1}([-1,1])$ consists of. The proof runs along the same lines as that of Theorem\,\ref{Theorem-Connectedness}.


\begin{theorem}
Let $\T_n\in\PP_n$. The inverse image $\T_n^{-1}([-1,1])$ consists of $k$, $k\in\{1,2,\ldots,n\}$, connected components $B_1,B_2,\ldots,B_k$ with $B_1\cup{B}_2\cup\ldots\cup{B}_k=\T_n^{-1}([-1,1])$ and $B_i\cap{B}_j=\{\}$, $i\neq{j}$, if and only if $n-k$ zeros of the derivative $\T_n'$ lie in $\T_n^{-1}([-1,1])$.
\end{theorem}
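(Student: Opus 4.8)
The plan is to mimic the proof of Theorem \ref{Theorem-Connectedness}, keeping track not of a single connected block of arcs but of a partition of the $n$ arcs $\Gamma_1,\dots,\Gamma_n$ into maximal connected sub-collections. Write $\Gamma:=\{\Gamma_1,\dots,\Gamma_n\}$ as in Lemma \ref{Lem-1}\,(vi) and let $B_1,\dots,B_k$ be the connected components of $\bigcup_j\Gamma_j=\T_n^{-1}([-1,1])$, where each $B_i$ is a union of, say, $n_i$ of the arcs, so that $n_1+\dots+n_k=n$. As in the proof of Theorem \ref{Theorem-Connectedness}, I count a crossing point of $m+1$ arcs with multiplicity $m$; by Lemma \ref{Lem-1}\,(iv) the total number of crossing points counted this way equals the number of zeros of $\T_n'$ counted with multiplicity, which is $n-1$ (since the zeros of $\T_n'$ that do \emph{not} lie in $\T_n^{-1}([-1,1])$ contribute to the $n$ arcs $\tilde C_j$ of Lemma \ref{Lem-1}\,(i) but are not crossing points of the $\Gamma_j$).

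The key observation is a bookkeeping identity: \textbf{within} a connected component $B_i$ made of $n_i$ arcs, the number of crossing points (counted with the above multiplicity, and counting only zeros of $\T_n'$ lying in $B_i$) is exactly $n_i-1$. For the ``$\Leftarrow$'' direction, if $n-k$ zeros of $\T_n'$ (with multiplicity) lie in $\T_n^{-1}([-1,1])$, one runs the connectivity-building argument of Theorem \ref{Theorem-Connectedness} inside each would-be component: starting from one arc and repeatedly adjoining arcs through zeros of $\T_n'$, each new arc contributes exactly one crossing point, so a block accumulating $n_i$ arcs uses up exactly $n_i-1$ of the available zeros. Since the arcs that never share a zero of $\T_n'$ with any previously built block start a fresh component, the number of components is $n$ minus the number of zeros of $\T_n'$ in $\T_n^{-1}([-1,1])$, i.e. $k = n-(n-k)$. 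The only subtlety, handled exactly as in Theorem \ref{Theorem-Connectedness} via Lemma \ref{Lem-1}\,(i) and (vi), is to rule out ``extra'' arcs appearing — that is, to show that the process really exhausts all $n$ arcs and produces no overcount.

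For the ``$\Rightarrow$'' direction, suppose $\T_n^{-1}([-1,1])$ has exactly $k$ components $B_1,\dots,B_k$ with $B_i$ consisting of $n_i$ arcs. Fix $i$; since $B_i$ is connected one may reorder the $n_i$ arcs of $B_i$ as $\Gamma_{k_1},\dots,\Gamma_{k_{n_i}}$ so that every partial union $\Gamma_{k_1}\cup\dots\cup\Gamma_{k_j}$ is connected, exactly as in the proof of Theorem \ref{Theorem-Connectedness}. Adjoining $\Gamma_{k_{j+1}}$ to a connected union of $j$ arcs creates exactly one new crossing point, so $B_i$ contributes $n_i-1$ crossing points, all of which are zeros of $\T_n'$ in $\T_n^{-1}([-1,1])$ by Lemma \ref{Lem-1}\,(iv); and crossing points in different components are distinct points. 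Summing over $i$, the number of zeros of $\T_n'$ (with multiplicity) lying in $\T_n^{-1}([-1,1])$ is $\sum_{i=1}^{k}(n_i-1) = n-k$.

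The main obstacle is the same one that is glossed in the proof of Theorem \ref{Theorem-Connectedness}: making the counting rigorous by showing that every crossing point of the $\Gamma_j$'s is ``charged'' to exactly one of the incremental steps in the reordering, with no crossing point counted twice and none missed. This rests on Lemma \ref{Lem-1}\,(vii) (two distinct arcs meet at most once) together with the multiplicity correspondence in Lemma \ref{Lem-1}\,(iv), and on the fact that a zero of $\T_n'$ of multiplicity $m$ lying in $\T_n^{-1}([-1,1])$ is shared by exactly $m+1$ of the arcs $\Gamma_j$. I would state this bookkeeping as a short lemma (or reuse the argument verbatim from Theorem \ref{Theorem-Connectedness} applied componentwise), after which the degree count $\sum n_i = n$ closes the argument in both directions.
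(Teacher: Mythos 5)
Your proposal is correct and follows exactly the route the paper intends: the remark after Theorem\,\ref{Theorem-Connectedness} states that the proof ``runs along the same lines'' as that theorem, and your componentwise bookkeeping (each component of $n_i$ arcs accounts for $n_i-1$ crossing points, i.e.\ $n_i-1$ zeros of $\T_n'$ counted with multiplicity, summing to $n-k$) is precisely that argument applied to each $B_i$. The only blemish is the sentence asserting that the total number of crossing points ``is $n-1$'' — it should be the number of zeros of $\T_n'$ \emph{lying in} $\T_n^{-1}([-1,1])$, as your own parenthetical and the rest of the argument correctly use.
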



\bibliographystyle{amsplain}
\bibliography{InversePolynomialImages}

\providecommand{\bysame}{\leavevmode\hbox to3em{\hrulefill}\thinspace}
\providecommand{\MR}{\relax\ifhmode\unskip\space\fi MR }
\providecommand{\MRhref}[2]{%
  \href{http://www.ams.org/mathscinet-getitem?mr=#1}{#2}
}
\providecommand{\href}[2]{#2}
\begin{thebibliography}{10}

\bibitem{Bogatyrev}
A.B. Bogatyr{\"e}v, \emph{On the efficient computation of {C}hebyshev
  polynomials for several intervals}, Sb. Math. \textbf{190} (1999),
  1571--1605.

\bibitem{Fischer-1992}
B.~Fischer, \emph{Chebyshev polynomials for disjoint compact sets}, Constr.
  Approx. \textbf{8} (1992), 309--329.

\bibitem{FischerPeherstorfer}
B.~Fischer and F.~Peherstorfer, \emph{Chebyshev approximation via polynomial
  mappings and the convergence behaviour of {K}rylov subspace methods},
  Electron. Trans. Numer. Anal. \textbf{12} (2001), 205--215 (electronic).

\bibitem{KamoBorodin}
S.O. Kamo and P.A. Borodin, \emph{Chebyshev polynomials for {J}ulia sets},
  Moscow Univ. Math. Bull. \textbf{49} (1994), 44--45.

\bibitem{Pakovich-1995}
F.~Pakovich, \emph{Elliptic polynomials}, Russian Math. Surveys \textbf{50}
  (1995), 1292--1294.

\bibitem{Pakovich-1998}
\bysame, \emph{Combinatoire des arbres planaires et arithm\'etique des courbes
  hyperelliptiques}, Ann. Inst. Fourier (Grenoble) \textbf{48} (1998),
  323--351.

\bibitem{Pakovich-2007}
\bysame, \emph{On trees that cover chains or stars}, Fundam. Prikl. Mat.
  \textbf{13} (2007), 207--215.

\bibitem{Pakovich-2008}
\bysame, \emph{On polynomials sharing preimages of compact sets, and related
  questions}, Geom. Funct. Anal. \textbf{18} (2008), 163--183.

\bibitem{Peh-1993}
F.~Peherstorfer, \emph{Orthogonal and extremal polynomials on several
  intervals}, J. Comput. Appl. Math. \textbf{48} (1993), 187--205.

\bibitem{Peh-1996}
\bysame, \emph{Minimal polynomials for compact sets of the complex plane},
  Constr. Approx. \textbf{12} (1996), 481--488.

\bibitem{Peh-2001}
\bysame, \emph{Deformation of minimal polynomials and approximation of several
  intervals by an inverse polynomial mapping}, J. Approx. Theory \textbf{111}
  (2001), 180--195.

\bibitem{Peh-2003}
\bysame, \emph{Inverse images of polynomial mappings and polynomials orthogonal
  on them}, J. Comput. Appl. Math. \textbf{153} (2003), 371--385.

\bibitem{PehSch-1999}
F.~Peherstorfer and K.~Schiefermayr, \emph{Description of extremal polynomials
  on several intervals and their computation. {I}, {II}}, Acta Math. Hungar.
  \textbf{83} (1999), 27--58, 59--83.

\bibitem{PehSch-2004}
\bysame, \emph{Description of inverse polynomial images which consist of two
  {J}ordan arcs with the help of {J}acobi's elliptic functions}, Comput.
  Methods Funct. Theory \textbf{4} (2004), 355--390.

\bibitem{PehSt}
F.~Peherstorfer and R.~Steinbauer, \emph{Orthogonal and {$L\sb q$}-extremal
  polynomials on inverse images of polynomial mappings}, J. Comput. Appl. Math.
  \textbf{127} (2001), 297--315.

\bibitem{Sch-2007a}
K.~Schiefermayr, \emph{Inverse polynomial images which consists of two {J}ordan
  arcs -- {A}n algebraic solution}, J. Approx. Theory \textbf{148} (2007),
  148--157.

\bibitem{Sch-2009}
\bysame, \emph{Inverse polynomial images consisting of an interval and an arc},
  Comput. Methods Funct. Theory \textbf{9} (2009), 407--420.

\bibitem{SoYu-1995}
M.L. Sodin and P.M. Yuditski{\u\i}, \emph{Algebraic solution of a problem of
  {E}.{I}.\,{Z}olotarev and {N}.{I}.\,{A}khiezer on polynomials with smallest
  deviation from zero}, J. Math. Sci. \textbf{76} (1995), 2486--2492.

\bibitem{Totik-2001}
V.~Totik, \emph{Polynomial inverse images and polynomial inequalities}, Acta
  Math. \textbf{187} (2001), 139--160.

\end{thebibliography}

\end{document}